\documentclass[a4paper,10pt]{amsart}						

	\usepackage{amsthm, amsfonts, verbatim,appendix, graphicx, subfig, psfrag}
	\usepackage{amsmath}   								
	\usepackage{mathrsfs}			
	\usepackage{hyperref}								

	\numberwithin{equation}{section}
	\numberwithin{figure}{section}


	\newtheorem{thm}{Theorem}
  	
  	\newtheorem{lem}[thm]{Lemma}
  	\newtheorem{prop}[thm]{Proposition}

	\theoremstyle{definition}

  	\newtheorem{rem}[thm]{Remark}

	\newcommand{\M}{\mathcal{M}}
	\newcommand{\Mbar}{\overline{\mathcal{M}}}

	\newcommand{\g}{\mathfrak{g}}

	\newcommand{\Li}{\mathscr{L}}

	\newcommand{\PP}{\mathbb{P}}
	
	\newcommand{\QQ}{\mathbb{Q}}
	
	\makeatletter
	\@namedef{subjclassname@2010}{%
	\textup{2010} Mathematics Subject Classification}
	\makeatother

\begin{document}

\title{Brill-Noether loci in codimension two}
\author{Nicola Tarasca}
\email{tarasca@math.uni-hannover.de}
\address{Institut f\"ur Algebraische Geometrie, Leibniz Universit\"at Hannover, Welfengarten~1, 30167 Hannover}
\subjclass[2010]{14H10 (primary), 14H51 (secondary)}
\keywords{Moduli of curves, Brill-Noether theory}

\begin{abstract}
Let us consider the locus in the moduli space of curves of genus $2k$
defined by curves with a pencil of degree $k$. Since the Brill-Noether
number is equal to $-2$, such a locus has codimension two. Using the method of test surfaces, we 
compute the class of its closure in the moduli space of stable
curves.
\end{abstract}

\maketitle

The classical Brill-Noether theory is of crucial importance for the geometry of moduli of curves. While a general curve admits only linear series with non-negative Brill-Noether number, the locus $\M^r_{g,d}$ of curves of genus $g$ admitting a $\g^r_d$ with negative Brill-Noether number $\rho(g,r,d):=g-(r+1)(g-d+r)<0$ is a proper subvariety of $\M_g$. Harris, Mumford and Eisenbud have extensively studied the case $\rho(g,r,d)=-1$ when $\M^r_{g,d}$ is a divisor in $\M_g$. They computed the class of its closure in $\Mbar_g$ and found that it has slope $6 + 12/(g+1)$. Since for $g\geq 24$ this is less than $13/2$ the slope of the canonical bundle, it follows that $\Mbar_g$ is of general type for $g$ composite and greater than or equal to $24$.

While in recent years classes of divisors in $\Mbar_g$ have been extensively investigated, codimension-two subvarieties are basically unexplored. A natural candidate is offered from Brill-Noether theory. Since $\rho(2k,1,k)=-2$, the locus $\M^1_{2k,k} \subset \M_{2k}$ of curves of genus $2k$ admitting a pencil of degree $k$ has codimension two (see \cite{MR1618632}). As an example, consider the hyperelliptic locus $\M^1_{4,2}$ in $\M_4$.

Our main result is the explicit computation of classes of closures of such loci. When $g\geq 12$, a basis for the codimension-two rational homology of the moduli space of stable curves $\Mbar_g$ has been found by Edidin (\cite{MR1177306}). It consists of the tautological classes $\kappa_1^2$ and $\kappa_2$ together with boundary classes. Such classes are still homologically independent for $g\geq 6$. 
Using the stability theorem for the rational cohomology of $\M_g$, Edidin's result can be extended to the case $g\geq 7$.
While there might be non-tautological generators coming from the interior of $\Mbar_g$ for $g=6$, one knows that Brill-Noether loci lie in the tautological ring of $\M_g$. 
Indeed in a similar situation, Harris and Mumford computed classes of Brill-Noether divisors in $\Mbar_g$ before knowing that Pic$_\QQ(\M_g)$ is generated solely by the class $\lambda$, by showing that such classes lie in the tautological ring of $\M_g$ (see \cite[Thm. 3]{MR664324}). Their argument works in arbitrary codimension. 

Since in our case $r=1$, in order to extend the result to the Chow group, we will use a theorem of Faber and Pandharipande, which says that classes of closures of loci of type $\M^1_{g,d}$ are tautological in $\Mbar_g$ (\cite{MR2120989}).

Having then a basis for the classes of Brill-Noether codimension-two loci, in order to determine the coefficients we use the method of test surfaces.
That is, we produce several surfaces in $\Mbar_g$ and, after evaluating the intersections on one hand with the classes in the basis and on the other hand with the Brill-Noether loci, we obtain enough independent relations in order to compute the coefficients of the sought-for classes. 

The surfaces used are bases of families of curves with several nodes, hence a good theory of degeneration of linear series is required. For this, the compactification of the Hurwitz scheme by the space of admissible covers introduced by Harris and Mumford comes into play. The intersection problems thus boil down first to counting pencils on the general curve, and then to evaluating the respective multiplicities via a local study of the compactified Hurwitz scheme.

For instance when $k=3$, we obtain the class of the closure of the trigonal locus in $\Mbar_6$.
\begin{thm}
The class of the closure of the trigonal locus in $\Mbar_6$ is 
\begin{eqnarray*}
\left[ \Mbar^1_{6,3} \right]_Q &= & \frac{41}{144} \kappa_1^2 -4 \kappa_2 + \frac{329}{144}\omega^{(2)}-\frac{2551}{144} \omega^{(3)} -\frac{1975}{144}\omega^{(4)} +\frac{77}{6}\lambda^{(3)}\\  
&&{} -\frac{13}{6}\lambda \delta_0 -\frac{115}{6}  \lambda \delta_1 -\frac{103}{6}\lambda \delta_2 -\frac{41}{144}\delta_0^2 -\frac{617}{144}  \delta_1^2+18 \delta_{1,1}\\
&&{} + \frac{823}{72}\delta_{1,2} +\frac{391}{72}\delta_{1,3} +\frac{3251}{360}\delta_{1,4}+\frac{1255}{72} \delta_{2,2} +\frac{1255}{72}\delta_{2,3}\\ &&{} +\delta_{0,0}+\frac{175}{72} \delta_{0,1} 
+\frac{175}{72}\delta_{0,2} -\frac{41}{72}\delta_{0,3} +\frac{803}{360}\delta_{0,4}+\frac{67}{72} \delta_{0,5}\\
&&{}+ 2\theta_1 -2 \theta_2 .
\end{eqnarray*}
\end{thm}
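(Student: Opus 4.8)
The plan is to reduce the computation to a finite problem in linear algebra and then solve it by the method of test surfaces. By the theorem of Faber and Pandharipande the class of the closure of $\M^1_{6,3}$ is tautological in $\Mbar_6$, and by the (extended) result of Edidin the codimension-two classes of $\Mbar_6$ are spanned by the generators $\kappa_1^2$, $\kappa_2$, the $\omega^{(j)}$, $\lambda^{(3)}$, the products $\lambda\delta_i$, the $\delta_i^2$, the boundary products $\delta_{i,j}$, and $\theta_1,\theta_2$. I can therefore write $[\Mbar^1_{6,3}]_Q$ as an a priori unknown rational linear combination of these generators, so that the whole task is to pin down the finitely many coefficients.

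To do so I would construct a collection of test surfaces $f\colon S\to\Mbar_6$, each being the base of an explicit two-parameter family of stable genus-$6$ curves. Each such surface produces one linear equation, obtained by equating the intersection number of $S$ with $[\Mbar^1_{6,3}]$ to the same linear combination of the numbers $S\cdot\kappa_1^2$, $S\cdot\kappa_2$, $S\cdot\omega^{(j)}$, and so on. The families would be built in the standard way, by fixing some components of a nodal curve and letting the remaining moduli together with the gluing points vary --- for instance pencils on a fixed surface, configurations with elliptic or rational tails attached at moving points, and families supported on products of lower-genus moduli spaces. I would choose enough surfaces that their vectors of intersection numbers against the basis classes are linearly independent, guaranteeing a unique solution.

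Computing the left-hand sides, i.e.\ the intersection numbers of each $S$ with the tautological and boundary generators, is routine: it reduces to Chern-class calculations on the universal curve over $S$ via the formulas of Mumford and Grothendieck--Riemann--Roch, together with the standard rules for restricting $\kappa$-, $\lambda$- and $\delta$-classes to families with tails and to boundary strata. The real difficulty, and the main obstacle, lies in computing the right-hand sides $S\cdot[\Mbar^1_{6,3}]$, that is, the number of pencils of degree $3$ carried by the curves of each family, counted with the correct multiplicities. On a general smooth fiber this is a finite Castelnuovo-type count of $\g^1_3$'s, but as the fibers degenerate to the nodal curves in the boundary of $S$ the pencils degenerate as well, and the enumeration must be carried out in the Harris--Mumford compactification of the Hurwitz scheme by admissible covers. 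The hard part will be the local analysis of this compactified Hurwitz space along the boundary of $S$: one must identify which admissible covers arise as limits of the $\g^1_3$'s and determine the multiplicity with which each contributes to the intersection, since it is precisely these multiplicities that encode the boundary coefficients.

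Once all the intersection numbers have been assembled, I would solve the resulting linear system. Having produced as many independent test surfaces as there are basis classes appearing, the equations determine every coefficient uniquely and yield the stated expression for $[\Mbar^1_{6,3}]_Q$. As a consistency check the formula should be compatible with the known boundary behavior of the Brill-Noether locus on the various strata, and should agree with the coefficients obtained by specializing the general-$k$ computation to $k=3$.
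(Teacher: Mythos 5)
Your proposal follows essentially the same route as the paper: expand $[\Mbar^1_{6,3}]_Q$ in Edidin's generators of $R^2(\Mbar_6)$ (justified via Faber--Pandharipande's tautologicality result), extract linear relations from explicit two-dimensional test families, evaluate the right-hand sides by counting admissible covers in the compactified Hurwitz scheme with a local multiplicity analysis, and solve the resulting non-degenerate linear system. This is precisely the strategy carried out in \S\S 1--7 of the paper, so the plan is sound; the substance of the proof of course lies in actually exhibiting the test surfaces and verifying the transversality and multiplicity-one statements, which your outline correctly identifies as the hard part.
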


\noindent For all $k\geq 3$ we produce a closed formula expressing the class of $\Mbar^1_{2k,k}$.
\begin{thm}
For $k\geq 3$ the class of the locus $\Mbar^1_{2k,k}$ in $\Mbar_{2k}$ is
\begin{eqnarray*}
\left[ \Mbar^1_{2k,k} \right]_Q & = & \frac{2^{k-6}(2k-7)!!}{3(k!)} \bigg[ (3 k^2+3 k+5)\kappa_1^2 - 24k(k+5)\kappa_2 \\
			&& {}+ \sum_{i=2}^{2k-2}  \Big(-180 i^4+120 i^3 (6 k+1)-36 i^2 \left(20 k^2+24 k-5\right) \\
			&&{} +24 i \left(52 k^2-16 k-5\right)+27 k^2+123 k+5 \Big)\omega^{(i)} + \cdots \bigg].
\end{eqnarray*}
\end{thm}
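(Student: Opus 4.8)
The plan is to reduce the statement to a finite linear-algebra problem and then to pin down the finitely many unknown coefficients by intersecting $\Mbar^1_{2k,k}$ against explicit test surfaces. By the theorem of Faber and Pandharipande quoted above, the class $\left[\Mbar^1_{2k,k}\right]_Q$ is tautological, so for every $k$ it lies in the $\QQ$-span of Edidin's basis for the codimension-two tautological group: the interior classes $\kappa_1^2$ and $\kappa_2$, together with the boundary classes $\omega^{(i)}$, $\lambda^{(i)}$, $\lambda\delta_i$, $\delta_i^2$, $\delta_{i,j}$ and $\theta_i$ already appearing in the genus-$6$ case. I would therefore write
\[
\left[\Mbar^1_{2k,k}\right]_Q = a\,\kappa_1^2 + b\,\kappa_2 + \sum_i c_i\,\omega^{(i)} + \cdots ,
\]
a linear combination with undetermined rational coefficients, each an unknown function of $k$, and the whole problem becomes the determination of these functions.

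For each family of stable curves $f\colon S\to\Mbar_{2k}$ over a surface $S$ I would then compute two quantities. First, the intersection numbers $\int_S f^*B$ for the basis classes $B$; these are governed by the numerical data of the family — self-intersections of the sections carrying the nodes, degrees of the Hodge and relative cotangent bundles, and the combinatorics of the boundary — and amount to bookkeeping. Second, and crucially, the number $\int_S f^*\left[\Mbar^1_{2k,k}\right]$, which counts, with multiplicity, the members of the family that acquire a $\g^1_k$. Equating the two expressions yields one linear relation among the unknowns per surface, and a sufficiently rich supply of surfaces produces a system of full rank that determines all the coefficients.

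To count the pencils on the members of a family I would degenerate to nodal curves and replace the Hurwitz scheme by the Harris--Mumford space of admissible covers, as indicated above. A Brill--Noether general curve of genus $2k$ carries no $\g^1_k$ since $\rho(2k,1,k)=-2$; pencils first appear in codimension two, and the relevant counts on the degenerate fibres reduce, via limit linear series, to Castelnuovo--Plücker numbers on the components — for a smooth genus $2k-2$ component, where $\rho(2k-2,1,k)=0$, the number of such pencils is $\tfrac{(2k-2)!}{(k-1)!\,k!}$, and it is factors of this shape that will assemble into the binomial prefactor of the theorem. Each such count must then be weighted by the local multiplicity with which the Brill--Noether locus meets the family, which is read off from an étale-local description of the compactified Hurwitz scheme. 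I would carry out both steps symbolically in $k$ (and, for the strata indexed by $i$, symbolically in $i$), so that every intersection number is produced as an explicit function of these parameters.

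The genuine difficulty is the local multiplicity computation: determining how many branches of $\Mbar^1_{2k,k}$ pass through a given boundary point and with what order of contact requires a careful analysis of the singularities of the space of admissible covers along its boundary, and this is where all the subtlety of the argument lies. Superimposed on this is the demand that the answer be uniform in $k$, which forces the individual counts to be organized so that their dependence on $k$ and on the stratum index $i$ collapses into the stated low-degree polynomials in $i$ with coefficients polynomial in $k$. I would guard against errors by checking that the resulting coefficients specialize at $k=3$ to the independently computed class of $\Mbar^1_{6,3}$, and by verifying that the linear system assembled from the test surfaces has full rank, so that the solution is unique. The boundary terms suppressed as ``$\cdots$'' are obtained by the same mechanism, using families that degenerate into the corresponding deeper strata.
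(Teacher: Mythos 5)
Your proposal follows essentially the same route as the paper: express $\left[\Mbar^1_{2k,k}\right]_Q$ in Edidin's codimension-two tautological basis (justified via Faber--Pandharipande), then determine the coefficients as functions of $k$ by intersecting with test surfaces, computing the enumerative side via limit linear series and admissible covers, handling the local multiplicities through the compactified Hurwitz scheme, and verifying that the resulting linear system has full rank. This is exactly the strategy carried out in the paper with its eighteen families (S1)--(S18) and the explicit triangularization of the matrix $Q_g$, so the outline is correct, though of course the substance lies in actually constructing enough surfaces and performing the transversality and counting arguments uniformly in $k$.
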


The complete formula is shown in \S \ref{result}. 
We also test our result in several ways, for example by pulling-back to $\Mbar_{2,1}$.
The computations include the case $g=4$ which was previously known: the hyperelliptic locus in $\Mbar_4$ has been computed in \cite[Prop. 5]{MR2120989}.

\setcounter{tocdepth}{1}
\tableofcontents

\section{A basis for \texorpdfstring{$R^2(\Mbar_{g})$}{R2(Mbar{g})}}
\label{generatingclasses}

Let $A^*(\Mbar_g)$ be the Chow ring with $\QQ$-coefficients of the moduli space of stable curves $\Mbar_g$, and let $R^*(\Mbar_g)\subset A^*(\Mbar_g)$ be the tautological ring of $\Mbar_g$ (see \cite{MR2120989}).
In \cite{MR1177306}, Edidin gives a basis for the space of codimension-two tautological classes $R^2(\Mbar_g)$ and he shows that such a basis holds also for the codimension-two rational homology of $\Mbar_g$ for $g\geq 12$. 

Let us quickly recall the notation. There are the tautological classes $\kappa_1^2$ and $\kappa_2$ coming from the interior $\M_g$; the following products of classes from ${\rm Pic}_\QQ(\Mbar_g)$: $\lambda \delta_0, \lambda \delta_1, \lambda \delta_2, \delta_0^2$ and $\delta_1^2$; the following push-forwards $\lambda^{(i)},\lambda^{(g-i)}, \omega^{(i)}$ and $\omega^{(g-i)}$ of the classes $\lambda$ and $\omega=\psi$ respectively from $\M_{i,1}$ and $\M_{g-i,1}$ to $\Delta_i\subset \Mbar_g$: $\lambda^{(3)}, \dots, \lambda^{(g-3)}$ and $\omega^{(2)},\dots, \omega^{(g-2)}$;  for $1\leq i \leq \lfloor (g-1)/2 \rfloor$ the $Q$-class $\theta_i$ of the closure of the locus $\Theta_i$ whose general element is a union of a curve of genus $i$ and a curve of genus $g-i-1$ attached at two points; finally the classes $\delta_{ij}$ defined as follows. The class $\delta_{00}$ is the $Q$-class of the closure of the locus $\Delta_{00}$ whose general element is an irreducible curve with two nodes. For $1\leq j \leq g-1$ the class $\delta_{0j}$ is the $Q$-class of the closure of the locus $\Delta_{0j}$ whose general element is an irreducible nodal curve of geometric genus $g-j-1$ together with a tail of genus $j$. At last for $1\leq i \leq j \leq g-2$ and $i+j \leq g-1$, the class $\delta_{ij}$ is defined as $\delta_{ij}:=[\overline{\Delta}_{ij}]_Q$, where $\Delta_{ij}$ has as general element a chain of three irreducible curves with the external ones having genus $i$ and $j$. 

\begin{figure}[htbp]
\psfrag{i}[b][b]{$i$}
\psfrag{j}[b][b]{$j$}
\psfrag{g-i-1}[b][b]{$g-i-1$}
\psfrag{g-i-j}[b][b]{$g-i-j$}
\psfrag{D00}[b][b]{$\Delta_{00}$}
\psfrag{Dij}[b][b]{$\Delta_{ij}$}
\psfrag{D0j}[b][b]{$\Delta_{0j}$}
\psfrag{theta}[b][b]{$\Theta_i$}
\centering
  \includegraphics[scale=0.5]{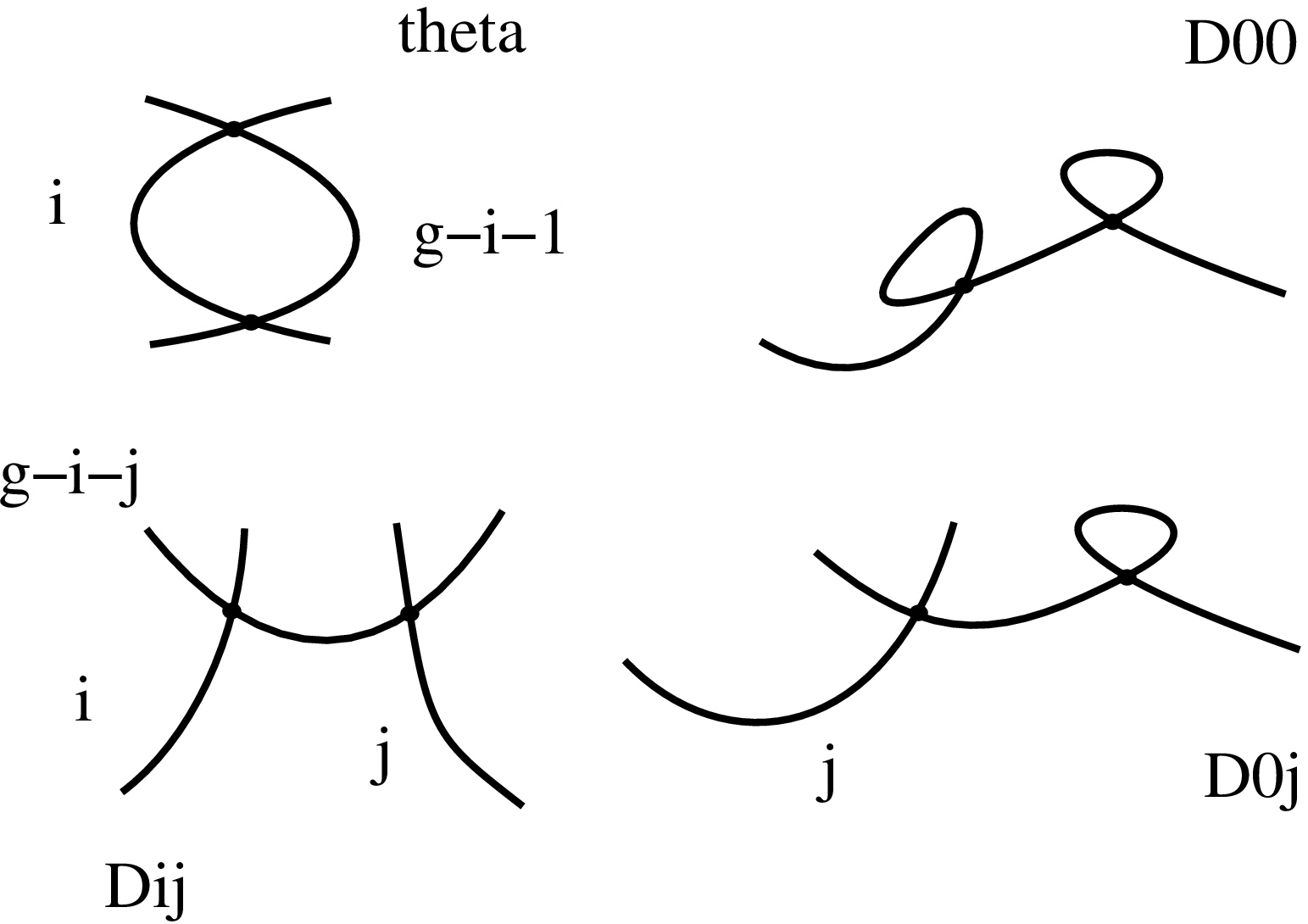}
  \caption{Loci in $\Mbar_g$}
\end{figure}

The above classes generate $R^2(\Mbar_g)$ and Edidin shows that they are homologically independent for $g\geq 6$. It follows that for $g\geq 6$ the space of codimension-two tautological classes $R^2(\Mbar_g)$ has dimension
\[
\lfloor (g^2-1)/4\rfloor + 3g-1.
\]

When $g\geq 12$, in order to conclude that the above classes form a basis also for $H_{2(3g-3)-4}(\Mbar_{g}, \QQ)$, Edidin gives an upper bound on the rank of $H_{2(3g-3)-4}(\Mbar_{g}, \QQ)$ using that $H^4(\M_g, \QQ)=\QQ^2$ for $g\geq 12$ as shown by Harer. 
By the stability theorem for the rational cohomology of $\M_g$, we know that
\[
H^k(\M_g,\QQ)\cong H^k(\M_{g+1},\QQ) \cong  H^k(\M_{g+2},\QQ) \cong \cdots
\]
for $3k\leq 2(g-1)$ (see for instance \cite{wahl}). It follows that the above classes form a basis for $H_{2(3g-3)-4}(\Mbar_{g}, \QQ)$ when $g\geq 7$.

While for $g=6$ there might be non-tautological generators coming from the interior of $\Mbar_g$, using an argument similar to \cite[Thm. 3]{MR664324} one knows that classes of Brill-Noether loci $\M^r_{g,d}$ lie in the tautological ring of $\M_g$. It follows that classes in $H_{2(3g-3)-4}(\Mbar_{g}, \QQ)$ of closures of Brill-Noether loci of codimension two can be expressed as linear combinations of the above classes for $g\geq 6$.

In the case $r=1$, we know more: classes of closures of Brill-Noether loci $\Mbar^1_{g,d}$ lie in the tautological ring of $\Mbar_g$ (see \cite[Prop. 1]{MR2120989}). Hence for $g=2k \geq 6$ we can write 
\begin{eqnarray}
\label{generalclass}
\left[ \Mbar^1_{2k,k} \right]_Q &\!\!\! = \!\!\!& A_{\kappa_1^2}\kappa_1^2 + A_{\kappa_2}\kappa_2 + A_{\delta_0^2}\delta_0^2 + A_{\lambda\delta_0}\lambda\delta_0 + A_{\delta_1^2}\delta_1^2  + A_{\lambda\delta_1}\lambda\delta_1  \nonumber \\
&&{}+ A_{\lambda\delta_2}\lambda\delta_2 + \sum_{i=2}^{g-2} A_{\omega^{(i)}}\omega^{(i)} + \sum_{i=3}^{g-3} A_{\lambda^{(i)}}\lambda^{(i)} + \sum_{i,j} A_{\delta_{ij}}\delta_{ij} \\
&&{}+ \sum_{i=1}^{\lfloor (g-1)/2 \rfloor} A_{\theta_i}\theta_{i}  \nonumber
\end{eqnarray}
in $R^2(\Mbar_{g}, \QQ)$, for some rational coefficients $A$.

\section{On the method of test surfaces}
\label{kappa&omega}

The method of test surfaces has been developed in \cite{MR1177306}. See \cite[\S 3.1.2, \S 3.4 and Lemma 4.3]{MR1177306} for computing the restriction of the generating classes to cycles parametrizing curves with nodes. In this section we summarize some results which will often be used in \S\ref{testsurfaces}.

In order to compute the restriction of $\kappa_1^2$ to test surfaces, we will use Mumford's formula for $\kappa_1$: if $g>1$ then $\kappa_1 = 12 \lambda -\delta$ in Pic$_\QQ(\Mbar_{g})$ (see \cite{MR0450272}). 
In the following proposition we note how to compute the restriction of the class $\kappa_2$ and the classes $\omega^{(i)}$ and $\lambda^{(i)}$ to a certain kind of surfaces which will appear in \S\ref{testsurfaces} in (S1)-(S14).

\begin{prop}
Let $\pi_1\colon X_1\rightarrow B_1$ be a one-dimensional family of stable curves of genus $i$ with section $\sigma_1\colon B_1\rightarrow X_1$ and similarly let $\pi_2\colon X_2\rightarrow B_2$ be a one-dimensional family of stable curves of genus $g-i$ with section $\sigma_2\colon B_2\rightarrow X_2$. Obtain a two-dimensional family of stable curves $\pi\colon X \rightarrow B_1 \times B_2$ as the union of $X_1\times B_2$ and $B_1\times X_2$ modulo glueing $\sigma_1(B_1)\times B_2$ with $B_1 \times \sigma_2(B_2)$.
Then the class $\kappa_2$ and the classes $\omega^{(i)}$ and $\lambda^{(i)}$ restrict to $B_1 \times B_2$ as follows
\[
\begin{array}{rcll}
\kappa_2 & = & 0 &\\
\omega^{(i)} = \omega^{(g-i)} & = & -{\pi_1}_*(\sigma_1^2(B_1)){\pi_2}_*(\sigma_2^2(B_2))&\mbox{if $2\leq i <g/2$}\\
\omega^{(g/2)} & =&  -2{\pi_1}_*(\sigma_1^2(B_1)){\pi_2}_*(\sigma_2^2(B_2)) & \mbox{if $g$ is even}\\
\omega^{(j)} &=& 0 		& \mbox{for $j\not\in \{i,g-i\}$}\\
\lambda^{(i)} &=& \lambda_{B_1}{\pi_2}_*(\sigma^2_2(B_2)) & \mbox{if $3\leq i <g/2$}\\
\lambda^{(g-i)} &=& \lambda_{B_2}{\pi_1}_*(\sigma^2_1(B_1))& \mbox{idem}\\
\lambda^{(g/2)} &=& \lambda_{B_1}{\pi_2}_*(\sigma^2_2(B_2))+\lambda_{B_2}{\pi_1}_*(\sigma^2_1(B_1))&\mbox{if $g$ is even}\\
\lambda^{(j)} &=& \lambda_{B_1}\delta_{j-i,1}|_{B_2}+ \lambda_{B_2}\delta_{j-g+i,1}|_{B_1}		&\mbox{for $j\not\in \{i,g-i\}$}
\end{array}
\]
where $\delta_{h,1}|_{B_1}\in {\rm Pic}_\QQ(\Mbar_{i,1})$ and similarly  $\delta_{h,1}|_{B_2}\in {\rm Pic}_\QQ(\Mbar_{g-i,1})$.
\end{prop}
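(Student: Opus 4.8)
The plan is to realise the surface $B_1\times B_2$ as the composition $\xi_i\circ\phi$, where $\xi_i\colon \Mbar_{i,1}\times\Mbar_{g-i,1}\to\Mbar_g$ is the clutching map with image $\Delta_i$ and $\phi=\bar\phi_1\times\bar\phi_2$ is the product of the classifying maps of the two families $(X_1,\sigma_1)\to B_1$ and $(X_2,\sigma_2)\to B_2$. Every restriction then reduces to two standard inputs, which I would record first. The first is the self-intersection (normal-bundle) formula $\xi_i^*[\Delta_i]=-\psi_1-\psi_2$ on the normalisation, where $\psi_1,\psi_2$ are the cotangent classes at the two branches of the node (with the usual extra factor $2$ when $i=g/2$, coming from the degree-two clutching). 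The second consists of the pullback identities $\bar\phi_1^*\lambda=\lambda_{B_1}$ and the fact that $\bar\phi_1^*\psi$ is the conormal class of the section, of degree $-{\pi_1}_*(\sigma_1^2(B_1))$, and symmetrically for the second factor.

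For $\kappa_2$ I would argue directly. Pulling Mumford's class $\kappa_2=\pi_*(c_1(\omega_\pi)^3)$ back along $\xi_i$ and expanding $c_1(\omega_\pi)=q^*c_1(\omega_{\pi_\ell})+[\Sigma]$ on each component (the relative dualizing sheaf of $X_\ell/B_\ell$ twisted by the node-section $\Sigma$, whose self-intersection is $-\psi$) gives $\xi_i^*\kappa_2=\kappa_2^{(i)}+\psi_1^2+\kappa_2^{(g-i)}+\psi_2^2$. Each summand is a codimension-two class on one of the factors $\Mbar_{i,1}$ or $\Mbar_{g-i,1}$, so its image under $\phi^*$ lands in $A^2$ of the curve $B_1$ or $B_2$ and therefore vanishes. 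Hence $\kappa_2$ restricts to $0$.

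For the remaining classes I would use $\omega^{(i)}=\xi_{i*}(\psi_1)$ and $\lambda^{(i)}=\xi_{i*}(\lambda)$ (external product with $1$ on the complementary factor) together with the self-intersection formula, which yields $\sigma^*\xi_{i*}(\alpha)=\phi^*\big(\alpha\cdot(-\psi_1-\psi_2)\big)$. For $\alpha=\psi_1$ the term $\psi_1^2$ dies, being a square of a $1$-class pulled back from $B_1$, while $-\psi_1\psi_2$ pulls back to $-{\pi_1}_*(\sigma_1^2(B_1)){\pi_2}_*(\sigma_2^2(B_2))$; the same computation with $\alpha=\psi_2$ gives $\omega^{(g-i)}$ the identical value, and the $i=g/2$ line acquires its factor $2$ from the clutching degree. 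For $\alpha=\lambda$ the term $\lambda\psi_1$ again dies (a $2$-class from $B_1$), while $-\lambda\psi_2$ pulls back to $\lambda_{B_1}{\pi_2}_*(\sigma_2^2(B_2))$, and symmetrically for $\lambda^{(g-i)}$, their sum giving the $g/2$ line.

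The delicate case, and the one I expect to be the main obstacle, is $\omega^{(j)}$ and $\lambda^{(j)}$ for $j\notin\{i,g-i\}$, where the support $\Delta_j$ is not the divisor containing the surface. Here I would analyse where $B_1\times B_2$ actually meets $\Delta_j$: a fibre $C_1\cup C_2$ lies in $\Delta_j$ precisely when one of the families degenerates so as to split off a component of genus $j-i$ on the $B_2$-side, or of genus $j-g+i$ on the $B_1$-side, that is, along the loci cut out by $\delta_{j-i,1}|_{B_2}$ and $\delta_{j-g+i,1}|_{B_1}$. Over each such locus the genus-$j$-separating node is constant along the free factor, so its cotangent class restricts to zero and $\omega^{(j)}$ contributes nothing; by contrast the Hodge class of the genus-$j$ aggregate varies through the Hodge class of the moving factor, producing $\lambda^{(j)}=\lambda_{B_1}\delta_{j-i,1}|_{B_2}+\lambda_{B_2}\delta_{j-g+i,1}|_{B_1}$. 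The bookkeeping of which boundary divisor on which base produces a genus-$j$ splitting, and the verification that no $\psi$-weight survives along these loci, is the part requiring the most care.
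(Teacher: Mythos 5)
Your proposal is correct and follows essentially the same route as the paper: the vanishing of $\kappa_2$ comes down to the fact that each summand of the restricted class is a codimension-two class on a one-dimensional factor (the paper phrases this as the vanishing of the cube of the divisor class $K_{X_\ell/B_\ell}+\sigma_\ell(B_\ell)$ pulled back from the surface $X_\ell$ to the normalization of the total space), and the remaining identities follow from the conormal description of $\psi$ along the section together with the self-intersection formula for the boundary divisor. Your more detailed treatment of the case $j\notin\{i,g-i\}$ --- locating the components of the $\Delta_j$-locus on $B_1\times B_2$ and observing that the cotangent class at the genus-$j$ node is constant along the free factor while the Hodge class of the genus-$j$ aggregate is not --- is exactly the content the paper compresses into ``the other equalities follow in a similar way.''
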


\begin{proof}
Let $\nu\colon \widetilde{X}\rightarrow X$ be the normalization, where $\widetilde{X} := X_1\times B_2 \cup B_1\times X_2$. Let $K_{X/B_1\times B_2} = c_1(\omega_{X/B_1\times B_2})$. We have
\begin{eqnarray*}
\kappa_2 &=& \pi_*\left( K^3_{X/B_1\times B_2} \right)\\
		&=& \pi_* \nu_*\left((\nu^*K_{X/B_1\times B_2}	)^3\right)
\end{eqnarray*}
where we have used that $\nu$ is a proper morphism, hence the push-forward is well-defined. 
One has
\[
K_{\widetilde{X}/B_1\times B_2} = \left( K_{X_1/B_1}\times B_2 \right)\oplus \left( B_1 \times K_{X_2/B_2}\right)
\]
hence
\[
\nu^*K_{X/B_1\times B_2} = \left( (K_{X_1/B_1}+\sigma_1(B_1))\times B_2 \right)\oplus \left( B_1 \times (K_{X_2/B_2}+\sigma_2(B_2))\right).
\]
Finally
\[
 \left( (K_{X_1/B_1}+\sigma_1(B_1))\times B_2 \right)^3=  (K_{X_1/B_1}+\sigma_1(B_1))^3\times B_2 =0
\]
since $K_{X_1/B_1}+\sigma_1(B_1)$ is a class on the surface $X_1$, and similarly for $B_1 \times (K_{X_2/B_2}+\sigma_2(B_2))$, 
hence $\kappa_2$ is zero.

The statement about the classes $\omega^{(i)}$ and $\lambda^{(i)}$ follows almost by definition. For instance, since the divisor $\delta_i$ is
\[
\delta_i = \pi_*(\sigma^2_1(B_1)\times B_2)+\pi_*(B_1\times \sigma^2_2(B_2))
\]
we have
\begin{eqnarray*}
\omega^{(i)} &=& -{\pi_1}_*(\sigma_1^2(B_1)) \cdot {\pi_2}_*(\sigma_2^2(B_2)) .
\end{eqnarray*}
The other equalities follow in a similar way. 
\end{proof}

\section{Enumerative geometry on the general curve}


Let $C$ be a complex smooth projective curve of genus $g$ and $l=(\Li, V)$ a linear series of type $\g^r_d$ on $C$, that is $\Li \in {\rm Pic}^d(C)$ and $V\subset H^0(C, \Li)$ is a subspace of vector-space dimension $r+1$. The {\it vanishing sequence} $a^l(p): 0\leq a_0 <\dots <a_r\leq d$ of $l$ at a point $p\in C$ is defined as the sequence of distinct order of vanishing of sections in $V$ at $p$, and the {\it ramification sequence} $\alpha^l(p): 0\leq \alpha_0 \leq \dots \leq \alpha_r\leq d-r$ as $\alpha_i := a_i-i$, for $i=0,\dots,r$. The {\it weight} $w^l(p)$ will be the sum of the $\alpha_i$'s.

Given an $n$-pointed curve $(C,p_1,\dots,p_n)$ of genus $g$ and $l$ a $\g^r_d$ on $C$, the {\it adjusted Brill-Noether number} is 
\begin{eqnarray*}
\rho(C,p_1,\dots p_n) &=& \rho(g,r,d,\alpha^l(p_1),\dots,\alpha^l(p_n)) \\
				  &:=& g-(r+1)(g-d+r)-\sum_{i,j} \alpha^l_j(p_i).
\end{eqnarray*}

\subsection{Fixing two general points}
\label{fixing2genpts}

Let $(C,p,q)$ be a general 2-pointed curve of genus $g\geq 1$ and let $\alpha=(\alpha_0, \dots,\alpha_r)$ and $\beta=(\beta_0,\dots,\beta_r)$ be  Schubert indices of type $r,d$ (that is $0\leq \alpha_0 \leq \dots \leq \alpha_r \leq d-r$ and similarly for $\beta$) such that $\rho(g,r,d, \alpha,\beta)=0$. 
The number of linear series $\g^r_d$ having ramification sequence $\alpha$ at the point $p$ and $\beta$ at the point $q$ is counted by the {\it adjusted Castelnuovo number}
\[
 g! \det\left( \frac{1}{[\alpha_i+i+\beta_{r-j}+r-j+g-d]!} \right)_{0\leq i,j\leq r}
\]
where $ 1/[\alpha_i+i+\beta_{r-j}+r-j+g-d]!$ is taken to be $0$ when the denominator is negative (see \cite[Proof of Prop. 2.2]{MR2574363} and \cite[Ex. 14.7.11 (v)]{MR1644323}). Note that the above expression may be zero, that is the set of desired linear series may be empty.  

When $r=1$ let us denote by $N_{g,d,\alpha,\beta}$ the above expression.
If $\alpha_0=\beta_0=0$ then
\begin{multline*}
N_{g,d,\alpha,\beta} = g! \Bigg(\frac{1}{(\beta_1+1+g-d)!(\alpha_1+1+g-d)!}\\
-\frac{1}{(g-d)!(\alpha_1+\beta_1+2+g-d)!} \Bigg).
\end{multline*}
Subtracting the base locus $\alpha_0 p + \beta_0 q$, one can reduce the count to the case $\alpha_0=\beta_0=0$, hence $N_{g,d,\alpha,\beta}=N_{g,d-\alpha_0-\beta_0,(0,\alpha_1-\alpha_0),(0,\beta_1-\beta_0)}$.

In the following we will also use the abbreviation $N_{g,d,\alpha}$ when $\beta$ is zero, that is $N_{g,d,\alpha}$ counts the linear series with the only condition of ramification sequence $\alpha$ at a single general point.

\subsection{A moving point}
\label{movingpt}

Let $C$ be a general curve of genus $g>1$ and $\alpha = (\alpha_0,\alpha_1)$ be a Schubert index of type $1,d$ (that is $0\leq \alpha_0 \leq \alpha_1\leq d-1$). When $\rho(g,1,d,\alpha)=-1$, there is a finite number $n_{g,d,\alpha}$ of $(x, l_C) \in C\times W^1_d(C)$ such that $\alpha^{l_C}(x)=\alpha$. (Necessarily $\rho(g,1,d)\geq 0$ since the curve is general.) Assuming $\alpha_0=0$, one has $\alpha_1=2d-g-1$ and
\[
n_{g,d,\alpha}=  (2d-g-1)(2d-g)(2d-g+1)\frac{g!}{d!(g-d)!}.
\]
If $\alpha_0>0$ then $n_{g,d,\alpha}=n_{g,d-\alpha_0,(0,\alpha_1-\alpha_0)}$. Each $\tilde{l}_C:=l_C(-\alpha_0 x)$ satisfies $h^0(\tilde{l}_C)=2$, is generated by global sections, and $H^0(C, \tilde{l}_C)$ gives a covering of $\PP^1$ with ordinary brach points except for a $(\alpha_1-\alpha_0)$-fold branch point, all lying over distinct points of $\PP^1$. Moreover, since for general $C$ the above points $x$ are distinct, one can suppose that fixing one of them, the $l_C$ is unique.
See \cite[Thm. B and pg. 78]{MR664324}. Clearly $\alpha$ in the lower indexes of the numbers $n$ is redundant in our notation, but for our purposes it is useful to keep track of it.

\subsection{Two moving points}
\label{m}

Let $C$ be a general curve of genus $g>1$ and $\alpha = (\alpha_0,\alpha_1)$ be a Schubert index of type $1,d$ (that is $0\leq \alpha_0 \leq \alpha_1\leq d-1$). When $\rho(g,1,d,\alpha,(0,1))=-2$ (and $\rho(g,1,d)\geq 0$), there is a finite number $m_{g,d,\alpha}$ of $(x,y,l_C) \in C \times C\times G^1_d(C)$ such that $\alpha^{l_C}(x)=\alpha$ and  $\alpha^{l_C}(y)=(0,1)$. Subtracting the base locus as usual, one can always reduce to the case $\alpha_0=0$.

\begin{lem}
Assuming $\alpha_0=0$, one has that
\[
m_{g,d,\alpha} = n_{g,d,\alpha} \cdot \left(3g-1   \right).
\]
\end{lem}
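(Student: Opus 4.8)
The plan is to realize $m_{g,d,\alpha}$ as a fibered count over $n_{g,d,\alpha}$: projecting a triple $(x,y,l_C)$ to the pair $(x,l_C)$ forgets the second marked point, and by the analysis of \S\ref{movingpt} the pairs $(x,l_C)$ with $\alpha^{l_C}(x)=\alpha$ are exactly those counted by $n_{g,d,\alpha}$. It therefore suffices to show that over each such pair the fiber of this projection has exactly $3g-1$ elements, i.e. that each of the enumerated pencils carries exactly $3g-1$ points $y$ with $\alpha^{l_C}(y)=(0,1)$; summing over the $n_{g,d,\alpha}$ pairs then yields the formula.

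Since we have reduced to the case $\alpha_0=0$, the pencil $l_C$ is base-point-free (as recorded in \S\ref{movingpt}) and defines a degree-$d$ morphism $\phi\colon C\to\PP^1$. For a point $y$, the ramification condition $\alpha^{l_C}(y)=(0,1)$ is equivalent to the vanishing sequence at $y$ being $(0,2)$, that is, to $y$ being a simple (ordinary) ramification point of $\phi$, of ramification weight $w^{l_C}(y)=1$. So the fiber count is precisely the number of ordinary branch points of $\phi$ lying away from $x$.

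First I would apply the Pl\"ucker formula for the total ramification weight of a $\g^1_d$ on a genus-$g$ curve, $\sum_{p\in C}w^{l_C}(p)=2d+2(g-1)$ (equivalently, Riemann--Hurwitz for $\phi$). The marked point $x$ contributes $w^{l_C}(x)=\alpha_0+\alpha_1=\alpha_1=2d-g-1$, using $\alpha_0=0$ and $\alpha_1=2d-g-1$ from \S\ref{movingpt}. Subtracting, the total ramification weight supported away from $x$ equals $2d+2(g-1)-(2d-g-1)=3g-1$. The genericity statement recalled in \S\ref{movingpt} (following Eisenbud--Harris) guarantees that for a general curve every branch point of $\phi$ other than the $\alpha_1$-fold one over $\phi(x)$ is ordinary, and that these lie over distinct points of $\PP^1$; hence each off-$x$ ramification point has weight exactly $1$, and there are precisely $3g-1$ of them, automatically distinct from $x$ and from one another. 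Each is a valid choice of $y$, so every fiber has cardinality $3g-1$ and $m_{g,d,\alpha}=n_{g,d,\alpha}\cdot(3g-1)$.

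The only step requiring real care is the genericity input: one must know that the finitely many enumerated pencils are general enough for all ramification away from $x$ to be simple and reduced, so that the weight-$(3g-1)$ locus consists of exactly $3g-1$ reduced points rather than a smaller set carrying multiplicities. This is exactly the transversality already established in \S\ref{movingpt}, so no fresh degeneration argument is needed; the remainder is the elementary Pl\"ucker bookkeeping above, which also accounts uniformly for the borderline case $\alpha_1=1$ (where $x$ is itself a simple branch point and one simply discards it from the $3g$ total).
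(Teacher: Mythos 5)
Your proposal is correct and follows exactly the route of the paper's (one-line) proof: first enumerate the pairs $(x,l_C)$, which gives the factor $n_{g,d,\alpha}$, then fix one such pair and apply Riemann--Hurwitz (equivalently the Pl\"ucker formula) to count the $3g-1$ remaining simple ramification points serving as $y$. Your version simply makes explicit the weight bookkeeping $2d+2(g-1)-(2d-g-1)=3g-1$ and the genericity input from \S\ref{movingpt} that the paper leaves implicit.
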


\begin{proof}
Since $\rho(g,1,d,\alpha)=-1$, one can compute first the number of points of type $x$, and then fixing one of these, use the Riemann-Hurwitz formula to find the number of points of type $y$.
\end{proof}

\section{Compactified Hurwitz scheme}

Let $H_{k,b}$ be the Hurwitz scheme parametrizing coverings $\pi\colon C\rightarrow \PP^1$ of degree $k$ with $b$ ordinary branch points and $C$ a smooth irreducible curve of genus $g$. By considering only the source curve $C$, $H_{k,b}$ admits a map to $\M_g$
\[
\sigma\colon H_{k,b} \rightarrow \M_g.
\]
In the following, we will use the compactification $\overline{H}_{k,b}$ of $H_{k,b}$ by the space of admissible covers of degree $k$, introduced by Harris and Mumford in \cite{MR664324}. 
Given a semi-stable curve $C$ of genus $g$ and a stable $b$-pointed curve $(R, p_1, p_2, \linebreak[1] \dots,  p_b)$ of genus $0$, an {\it admissible cover} is a regular map $\pi\colon C \rightarrow B$ such that the followings hold: $\pi^{-1}(B_{\rm smooth})= C_{\rm smooth}$, $\pi|_{C_{\rm smooth}}$ is simply branched over the points $p_i$ and unramified elsewhere, $\pi^{-1}(B_{\rm singular})= C_{\rm singular}$
and if $C_1$ and $C_2$ are two branches of $C$ meeting at a point $p$, then $\pi |_{C_1}$ and $\pi |_{C_2}$ have same ramification index at $p$. Note that one may attach rational tails at $C$ to cook up the degree of $\pi$.

The map $\sigma$ extends to
\[
\sigma\colon \overline{H}_{k,b} \rightarrow \Mbar_g.
\]
In our case $g=2k$, the image of this map is $\Mbar^1_{2k,k}$. It is classically known that the Hurwitz scheme is connected and its image in $\M_g$ (that is, $\M^1_{2k,k}$ in our case) is irreducible (see for instance \cite{MR0260752}).

Similarly for a Schubert index $\alpha=(\alpha_0,\alpha_1)$ of type $1,k$ such that $\rho(g,1,k,\alpha)=-1$ (and $\rho(g,1,k)\geq 0$), the Hurwitz scheme $H_{k,b}(\alpha)$ (respectively $\overline{H}_{k,b}(\alpha)$) parameterizes $k$-sheeted (admissible) coverings $\pi\colon C\rightarrow \PP^1$ with $b$ ordinary branch points $p_1,\dots,p_b$ and one point $p$ with ramification profile described by $\alpha$ (see \cite[\S 5]{MR791679}). By forgetting the covering and keeping only the pointed source curve $(C,p)$, we obtain a map $\overline{H}_{k,b}(\alpha) \rightarrow \Mbar_{g,1}$ with image the pointed Brill-Noether divisor $\Mbar^1_{g,k}(\alpha)$.

\vskip1pc

Let us see these notions at work. Let $(C,p,q)$ be a two-pointed general curve of genus $g-1\geq 1$. In the following, we consider the curve $\overline{C}$ in $\Mbar_{g,1}$ obtained identifying the point $q$ with a moving point $x$ in $C$. 
In order to construct this family of curves, one blows up $C\times C$ at $(p,p)$ and $(q,q)$ and identifies the proper transforms $S_1$ and $S_2$ of the diagonal $\Delta_C$ and $q \times C$. This is a family $\pi\colon X\rightarrow C$ with a section corresponding to the proper transform of $p\times C$, hence there exists a map $C \rightarrow \Mbar_{g,1}$. We denote by $\overline{C}$ the image of $C$ in $\Mbar_{g,1}$.

\begin{figure}[htbp]
\psfrag{P1}[b][b]{$(\PP^1)_1$}
\psfrag{P0}[b][b]{$(\PP^1)_0$}
\psfrag{p}[b][b]{$p$}
\psfrag{q}[b][b]{$q$}
\psfrag{x}[b][b]{$x$}
\psfrag{P}[b][b]{$R$}
\psfrag{E}[b][b]{$E$}
\centering
\subfloat[The case $2p\equiv q+x$]{ \includegraphics[scale=0.5]{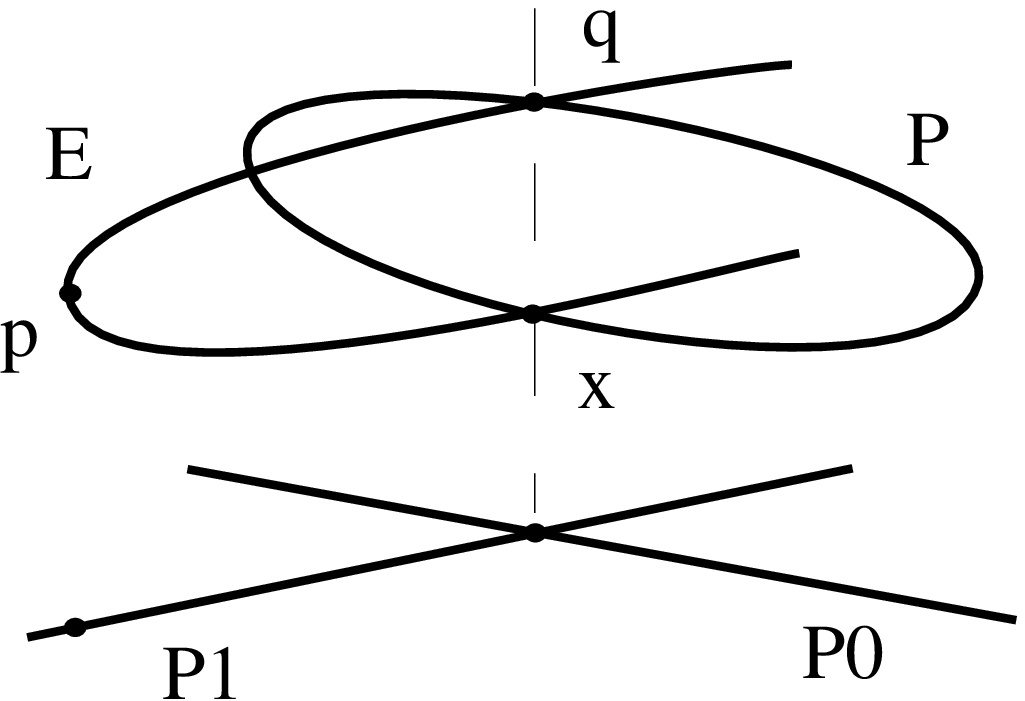}}
\subfloat[The case $x=p$]{ \includegraphics[scale=0.5]{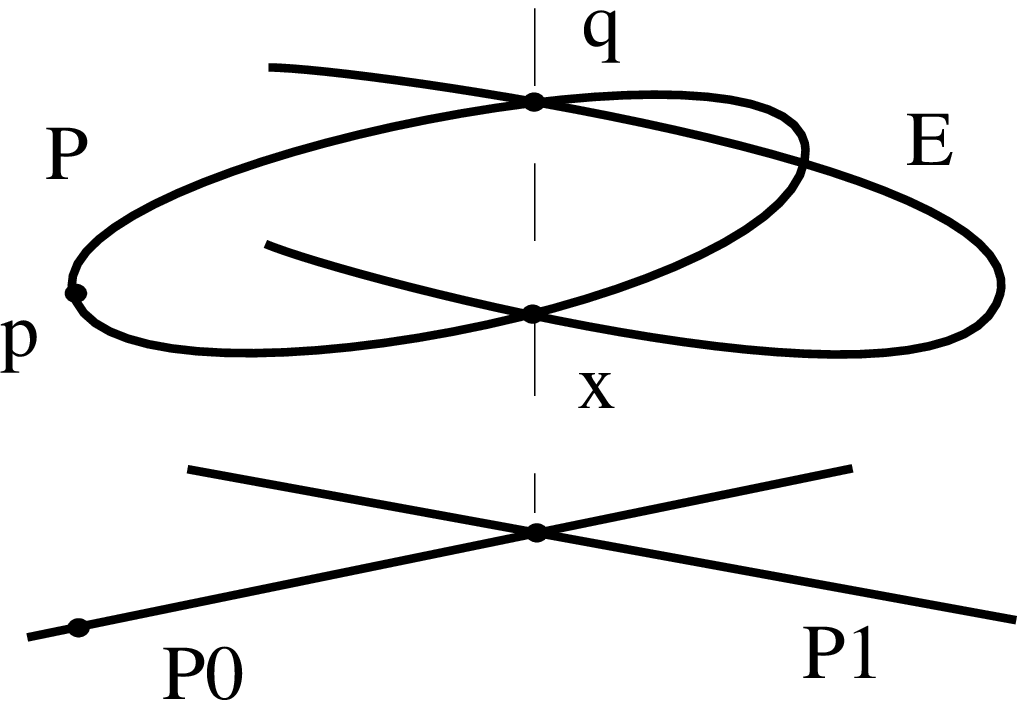}}  
  \caption{The admissible covers for the two fibers of the family $\overline{C}$ when $g=2$}
\end{figure}

\begin{lem}
\label{ex}
Let $g=2$ and let $\mathcal{W}$ be the closure of the Weierstrass divisor in $\Mbar_{2,1}$. We have that 
\[
\ell_{2,2} := \deg \left(\overline{C} \cdot \mathcal{W} \right)=2.
\] 
\end{lem}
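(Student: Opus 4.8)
The plan is to compute the zero-dimensional intersection $\overline{C}\cdot\mathcal W$ directly on the base of the family: I will identify the finitely many $x\in C$ for which the corresponding stable pointed curve lies on $\mathcal W$, and then sum the local intersection multiplicities. Write $\phi\colon C\rightarrow\Mbar_{2,1}$ for the classifying map of the family, so that $\deg(\overline{C}\cdot\mathcal W)=\deg_C\phi^*[\mathcal W]$, and denote by $C_x$ the genus-$2$ fiber over $x\in C$, obtained by glueing $q$ to $x$ on the elliptic curve $C$ and carrying the marked point $p$. Recall that $\mathcal W$ is the closure of the locus of $(D,p)$ for which $p$ is a Weierstrass point, i.e. a ramification point of the canonical (hyperelliptic) $\g^1_2$ on $D$, equivalently $2p\sim\omega_D$.

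First I treat the generic fibers, where $C_x$ is irreducible with a single node and geometric genus one. Here $h^0(\omega_{C_x})=2$, and along the normalization $\nu\colon C\rightarrow C_x$ one has $\nu^*\omega_{C_x}=\OO_C(q+x)$; since $\nu^*$ is injective on sections and $h^0(\OO_C(q+x))=2$, the canonical map of $C_x$ restricts on $C$ to the complete pencil $|q+x|$. As $p$ is a smooth point of $C_x$, it is a Weierstrass point exactly when the fiber of $\phi_{|q+x|}$ through $p$ is $2p$, that is when $2p\equiv q+x$ on the elliptic curve $C$. Because every degree-one class on $C$ has a unique effective representative, this equation has a single solution $x_0$, and for general $(C,p,q)$ one checks $x_0\neq p,q$; this is the fiber of Figure (a). Since the condition $2p\equiv q+x$ cuts out $x_0$ as a reduced point of $C$, the arc $\phi$ meets $\mathcal W$ transversally there and this fiber contributes $1$.

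It remains to inspect the two boundary fibers produced by the blow-ups, over $x=q$ and $x=p$. Over $x=q$ the stable fiber is the union of the elliptic curve $\widetilde C$ and a rational component with a single self-node (arithmetic genus one) meeting $\widetilde C$ at one point, with the marked point $p$ a general point of $\widetilde C$; the limiting pencil on $\widetilde C$ is $|2q|$, which ramifies at $p$ only when $p-q$ is $2$-torsion, so for general $(C,p,q)$ this fiber does not lie on $\mathcal W$ and contributes $0$. Over $x=p$ the stable fiber is the elliptic curve $\widetilde C$ together with a rational bridge joining the points $p$ and $q$ of $\widetilde C$, the marked point lying on the bridge (Figure (b)). To test membership in $\mathcal W$ I pass to admissible covers: since $p$ and $q$ are conjugate under the pencil $|p+q|$, there is an admissible double cover of a two-component target $(\PP^1)_0\cup(\PP^1)_1$ in which $\widetilde C$ covers $(\PP^1)_0$ via $|p+q|$ and the bridge covers $(\PP^1)_1$ with the marked point at a ramification point. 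Hence this fiber does lie on $\mathcal W$, and it remains to determine its local contribution.

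The main obstacle is exactly this last multiplicity, and it is the step where the degeneration theory of linear series genuinely enters: the interior condition $2p\equiv q+x$ does \emph{not} vanish at $x=p$, so the fiber $x=p$ is captured only by the closure $\mathcal W$, through the further degeneration in which the marked point collides with the node. I will read the multiplicity off from a local analysis of the Harris--Mumford space $\overline H_{2,b}$ near the admissible cover of Figure (b): the node where $p,q$ meet the bridge has ramification index one on both branches, the smoothing parameters of the target node and of the corresponding source node are related through this ramification index, and one compares the induced deformation with the boundary parameter of $\overline{C}$ at $x=p$. Carrying out this comparison is expected to give local multiplicity $1$; as an independent check one may instead derive the total degree from the known class $[\mathcal W]=-\lambda+3\psi-(\text{boundary})$ in $\mathrm{Pic}_\QQ(\Mbar_{2,1})$ and the intersection numbers of the family. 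Adding the two contributions then yields $\ell_{2,2}=\deg(\overline{C}\cdot\mathcal W)=1+1=2$.
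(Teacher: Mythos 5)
Your identification of the relevant fibers and their set-theoretic status is correct and follows the same route as the paper: the unique fiber of interior type with $2p\equiv q+x$, the fiber over $x=q$ (which indeed does not lie on $\mathcal W$ for general $(C,p,q)$), and the fiber over $x=p$, which is detected only through admissible covers. But the actual content of the lemma is the two local multiplicities, and neither is established. At $x=p$ you say the local analysis of $\overline{H}_{2,b}$ ``is expected to give local multiplicity $1$'' --- that is a plan, not a proof. The paper's argument at this fiber is that in a general deformation of the admissible cover $[C'\rightarrow P]$ the total space of the source has local equation $xy=t$ at each of the two nodes of $C'$ and $C'$ is the only fiber lying in $\Delta_{00}$, so $\mathcal W$ meets $\Delta_{00}$ transversally at $[C']$ and hence meets $\overline{C}$ transversally there; none of this is carried out in your write-up.

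At the fiber $x_0$ with $2p\equiv q+x_0$ there is a second, unacknowledged gap of the same nature. You argue that because the equation $2p\equiv q+x$ cuts out $x_0$ as a reduced point of the base, the arc meets $\mathcal W$ transversally. This does not follow: $\overline{C}$ lies entirely inside $\Delta_0$, and $\mathcal W$ is a closure taken from the interior, so the local equation of $\mathcal W$ restricted to $\Delta_0$ could a priori vanish to some order $m>1$ along the component of $\mathcal W\cap\Delta_0$ through this point (if $\mathcal W$ were tangent to, or singular along, $\Delta_0$); your computation then yields $m\cdot 1$ rather than $1$. The paper excludes this by the local study of $\overline{H}_{2,b}(0,1)$: a general deformation of the admissible cover meets $\Delta_0$ with multiplicity $2$ (contracting the rational bridge $R$ produces an $xy=t^2$ point), and this is exactly compensated by the degree-$2$ ramification of $\overline{H}_{2,b}(0,1)\rightarrow\Mbar_{2,1}$ coming from the involution interchanging the two ramification points of $R$, so that $\mathcal W$ is in fact transverse to $\Delta_0$ at $[C']$. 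Your proposed fallback --- computing $\deg(\overline{C}\cdot\mathcal W)$ from the class $-\lambda+3\psi-\delta_1$ and the intersection numbers of the family, as the paper does for the higher-genus analogue in Lemma \ref{ell} --- is a legitimate complete alternative and does give $3\cdot 1-0-1=2$, but it requires the $\delta_0$-coefficient of $[\mathcal W]$ and the value $\deg\delta_0=2-2g$ on $\overline{C}$, neither of which you supply.
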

\begin{proof}
There are two points in $\overline{C}$ with an admissible cover of degree $2$ with simple ramification at the marked point, and such admissible covers contribute with multiplicity one. Note that here $C$ is an elliptic curve. One admissible cover is for the fiber over $x$ such that $2p\equiv q+x$, and the other one for the fiber over $x=p$. In both cases the covering is determined by $|q+x|$ and there is a rational curve $R$ meeting $C$ in $q$ and $x$.

When $2p\equiv q+x$, the situation is as in \cite[Thm. 6(a)]{MR664324}. Let $C'\rightarrow P$ be the corresponding admissible covering. If\begin{eqnarray*}
\mathcal{C} \!\!\!\! & \longrightarrow & \!\!\!\! \mathcal{P} \\
&\searrow \quad \swarrow &\\
& B &
\end{eqnarray*} 
is a general deformation of $[C'\rightarrow P]$ in $\overline{H}_{2,b}(0,1)$, blowing down the curve $R$ we obtain a family of curves $\widetilde{\mathcal{C}}\rightarrow B$ with one ordinary double point. That is, $B$ meets $\Delta_0$ with multiplicity $2$. Considering the involution of $[C'\rightarrow P]$ obtained interchanging the two ramification points of $R$, we see that the map $\overline{H}_{2,b}(0,1)\rightarrow \Mbar_{2,1}$ is ramified at $[C'\rightarrow P]$. Hence $[C']$ is a transverse point of intersection of $\mathcal{W}$ with $\Delta_0$ and it follows that $\overline{C}$ and $\mathcal{W}$ meet transversally at $[C']$.

When $x=p$, the situation is similar. In a general deformation in $\overline{H}_{2,b}(0,1)$
\begin{eqnarray*}
\mathcal{C} \!\!\!\! & \longrightarrow & \!\!\!\! \mathcal{P} \\
&\searrow \quad \swarrow &\\
& B &
\end{eqnarray*} 
of the corresponding admissible covering $[C'\rightarrow P]$, one sees that $C'$ is the only fiber of $\mathcal{C}\rightarrow B$ inside $\Delta_{00}$, and at each of the two nodes of $C'$, the space $\mathcal{C}$ has local equation $x\cdot y = t$. It follows that $C'$ is a transverse point of intersection of $\mathcal{W}$ with $\Delta_{00}$. Hence $C'$ is a transverse point of intersection of $\overline{C}$ with $\mathcal{W}$.
See also \cite[\S 3]{MR735335}.
\end{proof}

\begin{lem}
\label{ell}
Let $g=2k-2>2$. The intersection of $\overline{C}$ with the pointed Brill-Noether divisor $\Mbar^1_{2k-2,k}(0,1)$ is reduced and it has degree
\[
\ell_{g,k}:= \deg \left( \overline{C}\cdot \Mbar^1_{2k-2,k}(0,1) \right)= 2\frac{(2k-3)!}{(k-2)!(k-1)!} .
\]
\end{lem}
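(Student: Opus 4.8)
The plan is to compute $\ell_{g,k}$ as a weighted count of the fibers of the family $\overline{C}$ that acquire a $\g^1_k$ with simple ramification at the marked point $p$, carrying out the enumeration on the normalization and fixing the multiplicities by a local study of $\sigma\colon\overline{H}_{k,b}(0,1)\to\Mbar_{2k-2,1}$, exactly in the spirit of Lemma \ref{ex}. The general fiber $C_x$ is the irreducible curve of arithmetic genus $2k-2$ obtained from the general curve $C$ of genus $2k-3$ by gluing $q$ to the moving point $x$, and its normalization is $C$. A limit $\g^1_k$ on $C_x$ ramified $(0,1)$ at $p$ pulls back to a $\g^1_k$ on $C$: indeed the general $C$ carries no $\g^2_k$, since $\rho(2k-3,2,k)<0$, so the node cannot raise $h^0$, and the associated map $\phi\colon C\to\PP^1$ must satisfy $\phi(q)=\phi(x)$ (the node maps to a single point) and be simply ramified at the smooth point $p$. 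Thus the interior points of $\overline{C}\cap\Mbar^1_{2k-2,k}(0,1)$ correspond to a pencil on $C$ ramified at $p$ together with a conjugate point $x$; the fibers over $x=p$ and $x=q$, which the blow-ups replace by curves carrying a rational bridge $R$, I would treat separately as admissible covers over a reducible target.

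Next comes the enumeration. On general $C$ the variety $W^1_k(C)$ is a curve, since $\rho(2k-3,1,k)=1$, and imposing simple ramification at the fixed general point $p$ is a single condition, so the pencils on $C$ ramified at $p$ are finite in number and counted by the adjusted Castelnuovo number of \S\ref{fixing2genpts}:
\[
N_{2k-3,k,(0,1)}=(2k-3)!\left(\frac{1}{(k-2)!\,(k-1)!}-\frac{1}{(k-3)!\,k!}\right)=\frac{2\,(2k-3)!}{(k-2)!\,k!}.
\]
For each such pencil the admissible nodes $q\sim x$ with $x\neq q$ come from the fiber $\phi^{-1}(\phi(q))$, which for general $q$ is reduced of length $k$, giving $k-1$ interior points; the fiber over $x=p$ supplies the remaining contribution, so the total factor $k$ yields
\[
\ell_{g,k}=k\cdot N_{2k-3,k,(0,1)}=\frac{2\,(2k-3)!}{(k-2)!\,(k-1)!}.
\]
I would confirm the base case $k=2$ against Lemma \ref{ex}, where $N_{1,2,(0,1)}=1$, the conjugate point is the fiber $2p\equiv q+x$, and the $x=p$ configuration is the second intersection point.

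Finally, reducedness. As in Lemma \ref{ex}, I would deform each admissible cover inside $\overline{H}_{k,b}(0,1)$, blow down the bridge $R$, read off the local equation $xy=t$ of the node-smoothing and the ramification of $\sigma$, and conclude transversality with the relevant boundary stratum, hence multiplicity one. The main obstacle is precisely this boundary bookkeeping: matching each degenerate fiber carrying a rational bridge (over $x=p$, while ruling out or accounting for $x=q$) to a unique admissible cover and showing that these contribute exactly $N_{2k-3,k,(0,1)}$ with multiplicity one, as well as excluding the spurious pencils — those with a base point at the node, or with the node lying over a branch point — which occur only in non-general position. Establishing the factor $k$ together with the reducedness through this local analysis of the compactified Hurwitz scheme is the heart of the argument.
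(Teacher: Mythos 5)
Your route is genuinely different from the paper's: you try to evaluate $\ell_{g,k}$ by directly enumerating admissible covers on the fibers of $\overline{C}$, whereas the paper sidesteps the enumeration entirely. It writes $[\Mbar^1_{2k-2,k}(0,1)]=a\lambda+c\psi-\sum b_i\delta_i$, computes the degrees of the generating divisor classes on $\overline{C}$ (namely $\deg\psi=1$, $\deg\delta_{g-1}=1$ and $\deg\delta_0=2-2g$ from the self-intersections of the glued sections), and then substitutes the known coefficients of the pointed Brill--Noether divisor from Farkas's formula; admissible covers are used only to prove reducedness. Your interior count agrees with what a direct enumeration should give: the $N_{2k-3,k,(0,1)}=2(2k-3)!/\bigl((k-2)!\,k!\bigr)$ pencils simply ramified at $p$ each contribute the $k-1$ points of $\phi^{-1}(\phi(q))\setminus\{q\}$, and $x=q$ is correctly excluded.

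There is, however, a genuine gap exactly where you place ``the heart of the argument'': the contribution of the fiber over $x=p$. You assert that it ``supplies the remaining contribution, so the total factor $k$,'' i.e.\ that it contributes $N_{2k-3,k,(0,1)}$ --- but that is precisely what needs to be proved, so as written the step is circular. Over $x=p$ the fiber is $C$ with a rational bridge $R$ through $p$ and $q$ carrying the marked point; in any admissible cover the simple ramification at the marked point is absorbed by the degree-two map $R\to\PP^1$, and the aspect on $C$ is a $\g^1_k$ with $p$ and $q$ in a common fiber, with \emph{no} ramification condition at $p$. So the relevant count is a different enumerative problem, not one of the numbers $N_{g,d,\alpha,\beta}$ of \S 3.1: it is the number $M$ of pencils on a general two-pointed curve with $\phi(p)=\phi(q)$, weighted by the local multiplicity of each cover (which here involves two nodes rather than one, as in the $x=p$ case of Lemma \ref{ex}). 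One can verify $M=N_{2k-3,k,(0,1)}$ in low genus (for $k=3$ via collinearity on the plane quartic, for $k=4$ via the net of quadrics), and it is plausible in general by degenerating $q\to p$, but your proposal supplies no argument for either the count or the multiplicity. Until that fiber's contribution is established independently, the formula is only verified up to an unknown summand.
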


\begin{proof}
Let us write the class of $\Mbar^1_{g,k}(0,1)$ as $a\lambda +c \psi-\sum b_i \delta_i \in {\rm Pic}_\QQ(\Mbar_{g,1})$.
First we study the intersection of the curve $\overline{C}$ with the classes generating the Picard group. Let $\pi \colon \Mbar_{g,1}\rightarrow \Mbar_g$ the map forgetting the marked point and $\sigma \colon \Mbar_g \rightarrow \Mbar_{g,1}$ the section given by the marked point. Note that on $\overline{C}$ we have
$\deg \psi = -\deg \pi_*(\sigma^2) =1$,
since the marked point is generically fixed and is blown-up in one fiber. Moreover $\deg \delta_{g-1}=1$, since only one fiber contains a disconnecting node and the family is smooth at this point.
The intersection with $\delta_0$ deserves more care. The family indeed is inside $\Delta_0$: the generic fiber has one non-disconnecting node and moreover the fiber over $x=p$ has two non-disconnecting nodes. We have to use \cite[Lemma 3.94]{MR1631825}. 
Then
\begin{eqnarray}
\quad \quad \deg \delta_0 = \deg S_1^2 + \deg S_2^2 + 1 =-2(g-1)-1+1=2-2g.
\end{eqnarray}
All other generating classes restrict to zero. Then 
\[
\deg \left( \overline{C} \cdot \left[\Mbar^1_{g,k}(0,1)\right] \right)= c + (2g-2) b_0 - b_{g-1}.
\]
On the other hand, one has an explicit expression for the class of $\Mbar^1_{g,k}(0,1)$
\[
\frac{(2k-4)!}{(k-2)!k!}\left( 6(k+1)\lambda +6(k-1)\psi -k \delta_0 + \sum_{i=1}^{g-1} 3(i+1)(2+i-2k) \delta_i \right)
\]
(see \cite[Thm. 4.5]{MR1953519}), whence the first part of the statement. 

Finally the intersection is reduced. Indeed, since the curve $C$ is general, an admissible cover with the desired property for a fiber of the family over $\overline{C}$ is determined by a unique linear series (see \cite[pg. 75]{MR664324}). Moreover, reasoning as in the proof of the previous Lemma, one sees that $\overline{C}$ and $\Mbar^1_{g,k}(0,1)$ meet always transversally. 
\end{proof}

\section{Limit linear series}
\label{lls}

The theory of limit linear series will be used. Let us quickly recall some notation and results.
On a tree-like curve, a linear series or a limit linear series is called {\it generalized} if the line bundles involved are torsion-free (see \cite[\S 1]{MR910206}).
For a tree-like curve $C=Y_1 \cup \cdots \cup Y_s$ of arithmetic genus $g$ with disconnecting nodes at the points $\{p_{ij}\}_{ij}$, let $\{l_{Y_1},\dots l_{Y_s} \}$ be a generalized limit linear series $\g^r_d$ on $C$. Let $\{q_{ik} \}_k$ be smooth points on $Y_i$, $i=1,\dots,s$. In \cite{MR846932} a moduli space of such limit series is constructed as a disjoint union of schemes on which the vanishing sequences of the aspects $l_{Y_i}$'s at the nodes are specified. A key property is the additivity of the adjusted Brill-Noether number, that is 
\[
\rho(g,r,d,\{\alpha^{l_{Y_i}}(q_{ik})\}_{ik})\geq \sum_i \rho(Y_i, \{p_{ij}\}_j,\{q_{ik}\}_k).
\]

The smoothing result \cite[Cor. 3.7]{MR846932} assures the smoothability of dimensionally proper limit series. The following facts will ease the computations. The adjusted Brill-Noether number for any $\g^r_d$ on one-pointed elliptic curves or on $n$-pointed rational curves is non-negative. For a general curve $C$ of arbitrary genus $g$, the adjusted Brill-Noether number for any $\g^r_d$ with respect to $n$ general points is non-negative. Moreover, $\rho(C,y)\geq -1$ for any $y\in C$ and any $\g^r_d$ (see \cite{MR985853}).

We will use the fact that if a curve of compact type has no limit linear series of type $\g^r_d$, then it is not in the closure of the locus $\M^r_{g,d}\subset \M_g$ of smooth curves admitting a $\g^r_d$.

\section{Test surfaces}
\label{testsurfaces}

We are going to intersect both sides of (\ref{generalclass}) with several test surfaces. This will produce linear relations in the coefficients $A$.

The surfaces will be defined for arbitrary $g\geq 6$ (also odd values). Note that while the intersections of the surfaces with the generating classes (that is the left-hand sides of the relations we get) clearly depend solely on $g$, only the right-hand sides are specific to our problem of intersecting the test surfaces with $\Mbar^1_{2k,k}$.

When the base of a family is the product of two curves $C_1 \times C_2$, we will denote by $\pi_1$ and $\pi_2$ the obvious projections. 

\vskip1pc

{\setlength{\leftmargini}{0pt} 
\begin{enumerate}
\item[(S1)] For $2\leq i \leq \lfloor g/2 \rfloor$ consider the family of curves whose fibers are obtained identifying a moving point on a general curve $C_1$ of genus $i$ with a moving point on a general curve $C_2$ of genus $g-i$. 

\begin{figure}[htbp]
\psfrag{C1}[b][b]{$C_1$}
\psfrag{C2}[b][b]{$C_2$}
\centering
  \includegraphics[scale=0.5]{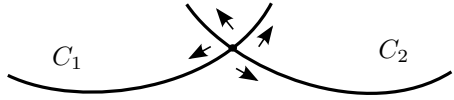}
  \caption{How the general fiber of a family in (S1) moves}
\end{figure}

The base of the family is the surface $C_1\times C_2$. In order to construct this family, consider $C_1 \times C_1 \times C_2$ and $C_1 \times C_2 \times C_2$ and identify $\Delta_{C_1}\times C_2$ with $C_1 \times \Delta_{C_2}$. Let us denote this family by $X\rightarrow C_1\times C_2$. 

One has
\begin{eqnarray*}
\delta_i &=& c_1\left( N_{(\Delta_{C_1}\times C_2)/X}\otimes N_{(C_1 \times \Delta_{C_2})/X} \right)	\\
		&=& -\pi_1^*(K_{C_1})-\pi_2^*(K_{C_2}).
\end{eqnarray*}
Such surfaces are in the interior of the boundary of $\Mbar_g$. The only nonzero classes in codimension two are the ones considered in $\S \ref{kappa&omega}$.

We claim that the intersection of these test surfaces with $\Mbar^1_{2k,k}$ has degree
\[
T_i := \mathop{\sum_{\alpha=(\alpha_0,\alpha_1)}}_{\rho(i,1,k,\alpha)=-1} n_{i,k,\alpha} \cdot n_{g-i,k,(k-1-\alpha_1,k-1-\alpha_0)}
\]
(in the sum, $\alpha$ is a Schubert index of type $1,k$).
Indeed by the remarks in \S \ref{lls}, if $\{l_{C_1},l_{C_2} \}$ is a limit linear series of type $\g^1_k$ on the fiber over some $(x,y)\in C_1\times C_2$, then the only possibility is $\rho(C_1,x)=\rho(C_2,y)=-1$. By \S \ref{movingpt}, there are exactly $T_i$ points $(x,y)$ with this property, the linear series $l_{C_1},l_{C_2}$ are uniquely determined and give an admissible cover of degree $k$. Thus to prove the claim we have to show that such points contribute with multiplicity one.

Let us first assume that $i>2$. Let $\pi \colon C' \rightarrow P$ be one of these admissible covers of degree $k$, that is, $C'$ is stably equivalent to a certain fiber $C_1\cup_{x\sim y} C_2$ of the family over $C_1\times C_2$. Let us describe more precisely the admissible covering. Note that $P$ is the union of two rational curves $P=(\PP^1)_1\cup (\PP^1)_2$. Moreover $\pi|_{C_1} \colon C_1 \rightarrow (\PP^1)_1$ is the admissible covering of degree $k-\alpha_0$ defined by $l_{C_1}(-\alpha_0 x)$, $\pi|_{C_2} \colon C_2 \rightarrow (\PP^1)_2$ is the admissible covering of degree $k-(k-1-\alpha_1)=\alpha_1+1$ defined by $l_{C_2}(-(k-1-\alpha_1) y)$, and $\pi$ has $\ell$-fold branching at $p:=x\equiv y$ with $\ell:=\alpha_1+1-\alpha_0$. Finally there are $\alpha_0$ copies of $\PP^1$ over $(\PP^1)_1$ and further $k-1-\alpha_1$ copies over $(\PP^1)_2$.

Such a cover has no automorphisms, hence the corresponding point $[\pi \colon C' \rightarrow P]$ in the Hurwitz scheme $\overline{H}_{k,b}$ is smooth, and moreover such a point is not fixed by any $\sigma \in \Sigma_b$. Let us embed $\pi \colon C' \rightarrow P$ in a one-dimensional family of admissible coverings 
\begin{eqnarray*}
\mathcal{C} \!\!\!\! & \longrightarrow & \!\!\!\! \mathcal{P} \\
&\searrow \quad \swarrow &\\
& B &
\end{eqnarray*}
where locally near the point $p$
\begin{eqnarray*}
\mathcal{C} & \mbox{is}& r\cdot s = t,\\
\mathcal{P} & \mbox{is} & u\cdot v = t^{\ell},\\
\pi 		   & \mbox{is} & u=r^{\ell}, \,\, v=s^{\ell}
\end{eqnarray*}
and $B:= {\rm Spec}\, \mathbb{C} [[t]]$.
Now $\mathcal{C}$ is a smooth surface and after contracting the extra curves $\PP^1$, we obtain a family $\mathcal{C}\rightarrow B$ in $\Mbar_g$ transverse to $\Delta_i$ at the point $[C']$. Hence $(x,y)$ appears with multiplicity one in the intersection of $\Mbar^1_{2k,k}$ with $C_1\times C_2$.

Finally if $i=2$, then one has to take into account the automorphisms of the covers. To solve this, one has to work with the universal deformation space of the corresponding curve. The argument is similar (see \cite[pg 80]{MR664324}).

For each $i$ we deduce the following relation
\[
(2i-2)(2(g-i)-2)\left[2A_{\kappa_1^2} - A_{\omega^{(i)}} - A_{\omega^{(g-i)}} \right]= T_i.
\]
Note that, if $i=g/2$, then $A_{\omega^{(i)}}$ and $A_{\omega^{(g-i)}}$ sum up.

\vskip1pc

\item[(S2)] Choose $i,j$ such that $2\leq i\leq j \leq g-3$ and $i+j\leq g-1$. Take a general two-pointed curve $(F,p,q)$ of genus $g-i-j$ and attach at $p$ a moving point on a general curve $C_1$ of genus $i$ and at $q$ a moving point on a general curve $C_2$ of genus $j$. 

\begin{figure}[htbp]
\psfrag{C1}[b][b]{$C_1$}
\psfrag{C2}[b][b]{$C_2$}
\psfrag{F}[b][b]{$F$}
\centering
  \includegraphics[scale=0.5]{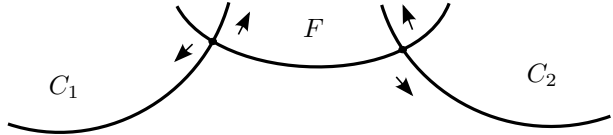}
  \caption{How the general fiber of a family in (S2) moves}
\end{figure}

The base of the family is $C_1\times C_2$. To construct the family, consider $C_1\times C_1 \times C_2$ and $C_1\times C_2\times C_2$ and identify $\Delta_{C_1}\times C_2$ and $C_1\times \Delta_{C_2}$ with the general constant sections $p\times C_1\times C_2$ and $q\times C_1\times C_2$ of $F\times C_1 \times C_2\rightarrow C_1\times C_2$. Denote this family by $X\rightarrow C_1\times C_2$.
Then
\begin{eqnarray*}
\delta_i &=& c_1\left( N_{(\Delta_{C_1}\times C_2)/X} \otimes N_{(p\times C_1\times C_2)/X}  \right) \\
	    &=& -\pi_1^*(K_{C_1})\\
\delta_j &=& c_1\left( N_{(C_1\times \Delta_{C_2})/X} \otimes N_{(q\times C_1\times C_2)/X}  \right)\\
	     &=& - \pi_2^*(K_{C_2})
\end{eqnarray*}
and
\begin{eqnarray*}
\delta_{ij} &=& c_1\left( N_{(\Delta_{C_1}\times C_2)/X} \otimes N_{(p\times C_1\times C_2)/X}  \right) \\
&& \quad \quad \quad\quad \cdot c_1\left( N_{(C_1\times \Delta_{C_2})/X} \otimes N_{(q\times C_1\times C_2)/X}  \right)\\
                &=& \pi_1^*(K_{C_1})\pi_2^*(K_{C_2}).
\end{eqnarray*}

We claim that the intersection of these test surfaces with $\Mbar^1_{2k,k}$ has degree
\begin{multline*}
D_{ij} :=  \\
\mathop{\mathop{{\mathop{\sum_{\alpha=(\alpha_0,\alpha_1)}}_{\beta=(\beta_0,\beta_1)}}}_{\rho(i,1,k,\alpha)=-1}}_{\rho(j,1,k,\beta)=-1} 
 n_{i,k,\alpha}\,  n_{j,k,\beta}  \, 
 N_{g-i-j,k,(k-1-\alpha_1,k-1-\alpha_0),(k-1-\beta_1,k-1-\beta_0)}
\end{multline*}
(in the sum, $\alpha$ and $\beta$ are Schubert indices of type $1,k$.)
Indeed by \S \ref{lls}, if $\{l_{C_1},l_F, l_{C_2} \}$ is a limit linear series of type $\g^1_k$ on the fiber over some $(x,y)\in C_1\times C_2$, then the only possibility is $\rho(C_1,x)=\rho(C_2,y)=-1$ while $\rho(F,p,q)=0$. By \S \ref{fixing2genpts} and \S \ref{movingpt}, there are
\[
\mathop{\mathop{{\mathop{\sum_{\alpha=(\alpha_0,\alpha_1)}}_{\beta=(\beta_0,\beta_1)}}}_{\rho(i,1,k,\alpha)=-1}}_{\rho(j,1,k,\beta)=-1} 
 n_{i,k,\alpha}\,  n_{j,k,\beta}
\]
points $(x,y)$ in $C_1\times C_2$ with this property, the $l_{C_1},l_{C_2}$ are uniquely determined and there are 
\[
N:= N_{g-i-j,k,(k-1-\alpha_1,k-1-\alpha_0),(k-1-\beta_1,k-1-\beta_0)}
\] 
choices for $l_F$. That is, there are $N$ points of $\overline{H}_{k,b}/\Sigma_b$ over $[C_1 \cup_{x\sim p} F \cup_{y\sim q} C_2]\in \Mbar^1_{2k,k}$ and $\Mbar^1_{2k,k}$ has $N$ branches at $[C_1 \cup_{x\sim p} F \cup_{y\sim q} C_2]$. The claim is thus equivalent to say that each branch meets $\Delta_{ij}$ transversely at $[C_1 \cup_{x\sim p} F \cup_{y\sim q} C_2]$.

The argument is similar to the previous case. Let $\pi \colon C' \rightarrow D$ be an admissible cover of degree $k$ with $C'$ stably equivalent to a certain fiber of the family over $C_1\times C_2$. The image of a general deformation of $[C'\rightarrow D]$ in $\overline{H}_{k,b}$ to the universal deformation space of $C'$ meets $\Delta_{ij}$ only at $[C']$ and locally at the two nodes, the deformation space has equation $xy=t$. Hence $[C']$ is a transverse point of intersection of $\Mbar^1_{2k,k}$ with $\Delta_{ij}$ and the surface $C_1\times C_2$ and $\Mbar^1_{2k,k}$ meet transversally.

For $i,j$ we obtain the following relation
\[
(2i-2)(2j-2) \left[ 2 A_{\kappa_1^2}+ A_{\delta_{ij}} \right] = D_{ij}.
\]

\vskip1pc

\item[(S3)] Let $(E,p,q)$ be a general two-pointed elliptic curve. Identify the point $q$ with a moving point $x$ on $E$ and identify the point $p$ with a moving point on a general curve $C$ of genus $g-2$. 

\begin{figure}[htbp]
\psfrag{C}[b][b]{$C$}
\psfrag{E}[b][b]{$E$}
\centering
  \includegraphics[scale=0.5]{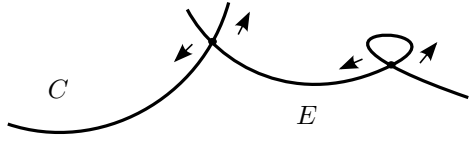}
  \caption{How the general fiber of a family in (S3) moves}
\end{figure}

The base of the family is $E\times C$. To construct the family, let us start from the blow-up $\widetilde{E\times E}$ of $E\times E$ at the points $(p,p)$ and $(q,q)$. Denote by $\sigma_p,\sigma_q,\sigma_\Delta$ the proper transforms respectively of $p\times E,q\times E,\Delta_E$. The family is the union of $\widetilde{E\times E}\times C$ and $E\times C\times C$ with $\sigma_q\times C$ identified with $\sigma_\Delta\times C$ and $\sigma_p\times C$ identified with $E\times \Delta_C$. We denote the family by $\pi\colon X\rightarrow E\times C$.

The study of the restriction of the generating classes in codimension one is similar to the case in the proof of Lemma \ref{ell}. Namely
\begin{eqnarray*}
\delta_0 &=& -\pi_1^*(2q)\\
\delta_1 &=& \pi_1^*(q)\\
\delta_{g-2} &=& -\pi_1^*(p)-\pi_2^*(K_{C}).
\end{eqnarray*}
Indeed the family is entirely contained inside $\Delta_0$: each fiber has a unique non-disconnecting node with the exception of the fibers over $p\times C$ which have two non-disconnecting nodes. Looking at the normalization of the family, fibers become smooth with the exception of the fibers over $p\times C$ which have now one non-disconnecting node, and the family is smooth at these points. It follows that $\delta_0=\pi_*(\sigma_q\times C)^2+\pi_*(\sigma_\Delta\times C)^2 +p\times C$. Only the fibers over $q\times C$ contain a node of type $\Delta_1$, and the family is smooth at these points. Finally the family is entirely inside $\Delta_{g-2}$ and $\delta_{g-2}=\pi_*(\sigma_p\times C)^2+\pi_*(E\times \Delta_C)^2$.
We note the following
\begin{eqnarray*}
\delta_{1,g-2} &=& [ \pi_1^*(q)][-\pi_2^*(K_{C})]\\
\delta_{0,g-2} &=& [-\pi_1^*(2q)][-\pi_2^*(K_{C})].
\end{eqnarray*}

Let us study the intersection of this test surface with $\Mbar^1_{2k,k}$. Let $C'\rightarrow D$ be an admissible cover of degree $k$ with $C'$ stably equivalent to a certain fiber of the family. Clearly the only possibility is to map $E$ and $C$ to two different rational components of $D$ with $q$ and $x$ in the same fiber, and have a $2$-fold ramification at $p$. From Lemma \ref{ex} there are two possibilities for the point $x\in E$, and there are $n_{g-2,k,(0,1)}$ points in $C$ where a degree $k$ covering has a $2$-fold ramification. In each case the covering is unique up to isomorphism. The combination of the two makes 
\[
2n_{g-2,k,(0,1)}
\]
admissible coverings. We claim that they count with multiplicity one.

The situation is similar to Lemma \ref{ex}. The image of a general deformation of $[C'\rightarrow D]$ in $\overline{H}_{k,b}$ to the universal deformation space of $C'$ meets $\Delta_{00}\cap \Delta_2$ only at $[C']$. Locally at the three nodes, the deformation space has equation $xy=t$. Hence $[C']$ is a transverse point of intersection of $\Mbar^1_{2k,k}$ with $\Delta_{00}\cap \Delta_2$ and counts with multiplicity one in the intersection of the surface $E\times C$ with $\Mbar^1_{2k,k}$.

We deduce the following relation
\begin{eqnarray*}
(2(g-2)-2) \Big[4A_{\kappa_1^2} -A_{\omega^{(2)}}-A_{\omega^{(g-2)}}
- A_{\delta_{1,g-2}} + 2 A_{\delta_{0,g-2}}
 \Big]= 2n_{g-2,k,(0,1)}.
\end{eqnarray*}

\vskip1pc

\item[(S4)] For $2\leq i \leq g-3$, let $(F,r,s)$ be a general two-pointed curve of genus $g-i-2$. Let $(E,p,q)$ be a general two-pointed elliptic curve and as above identify the point $q$ with a moving point $x$ on $E$. Finally identify the point $p\in E$ with $r\in F$ and identify the point $s\in F$ with a moving point on a general curve $C$ of genus $i$. 

\begin{figure}[htbp]
\psfrag{C}[b][b]{$C$}
\psfrag{E}[b][b]{$E$}
\psfrag{F}[b][b]{$F$}
\centering
  \includegraphics[scale=0.5]{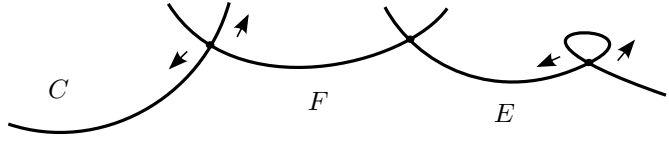}
  \caption{How the general fiber of a family in (S4) moves}
\end{figure}

The base of the family is $E\times C$. Let $\widetilde{E\times E}, \sigma_p,\sigma_q,\sigma_\Delta$ be as above. Then the family is the union of $\widetilde{E\times E}\times C, E\times C\times C$ and $F\times E\times C$ with the following identifications. First $\sigma_q\times C$ is identified with $\sigma_\Delta\times C$. Finally $\sigma_p\times E$ is identified with $r\times E\times C\subset F\times E\times C$, and $s\times E\times C\subset F\times E\times C$ with $E\times \Delta_C$.

The restriction of the generating classes in codimension one is 
\begin{eqnarray*}
\delta_0 &=& -\pi_1^*(2q)\\
\delta_1 &=& \pi_1^*(q)\\
\delta_2 &=& -\pi_1^*(p)\\
\delta_i &=& -\pi_2^*(K_{C})
\end{eqnarray*}
and one has the following restrictions
\begin{eqnarray*}
\delta_{1,i} &=& [\pi_1^*(q)][-\pi_2^*(K_{C})]\\
\delta_{0,i} &=& [-\pi_1^*(2q)][-\pi_2^*(K_{C})]\\
\delta_{2,i} &=& [-\pi_1^*(p)][-\pi_2^*(K_{C})].
\end{eqnarray*}

Suppose $C'\rightarrow D$ is an admissible covering of degree $k$ with $C'$ stably equivalent to a certain fiber of this family. The only possibility is to map $E,F,C$ to three different rational components of $D$, with a $2$-fold ramification at $r$ and ramification prescribed by $\alpha=(\alpha_0,\alpha_1)$ at $s$, such that $\rho(i,1,k,\alpha)=-1$. The condition on $\alpha$ is equivalent to 
\[
\rho(g-i-2,1,k,(0,1),(k-1-\alpha_1,k-1-\alpha_0))=0.
\]
Moreover, $q$ and $x$ have to be in the same fiber of such a covering.
There are 
\[
\mathop{\sum_{\alpha=(\alpha_0,\alpha_1)}}_{\rho(i,1,k,\alpha)=-1} 2 n_{i,k,\alpha}
\]
fibers which admit an admissible covering with such properties (in the sum, $\alpha$ is a Schubert index of type $1,k$). While the restriction of the covering to $E$ and $C$ is uniquely determined up to isomorphism, there are 
\[
N:= N_{g-i-2,k,(0,1),(k-1-\alpha_1,k-1-\alpha_0)}
\]
choices for the restriction to $F$ up to isomorphism. As in (S2), this is equivalent to say that $\Mbar^1_{2k,k}$ has $N$ branches at $[C']$. Moreover, each branch meets the boundary transversally at $[C']$ (similarly to (S3)), hence $[C']$ counts with multiplicity one in the intersection of $E\times C$ with $\Mbar^1_{2k,k}$.

Finally, for each $i$ we deduce the following relation
\begin{multline*}
(2i-2) \Big[4A_{\kappa_1^2} - A_{\delta_{1,i}} + 2 A_{\delta_{0,i}} +A_{\delta_{2,i}} \Big]\\
 = \mathop{\sum_{\alpha=(\alpha_0,\alpha_1)}}_{\rho(i,1,k,\alpha)=-1} 2N_{g-i-2,k,(0,1),(k-1-\alpha_1,k-1-\alpha_0)}\cdot n_{i,k,\alpha}.
\end{multline*}

\vskip1pc
\item[(S5)] Identify a base point of a generic pencil of plane cubic curves with a moving point on a general curve $C$ of genus $g-1$. 

\begin{figure}[htbp]
\psfrag{C}[b][b]{$C$}
\centering
  \includegraphics[scale=0.5]{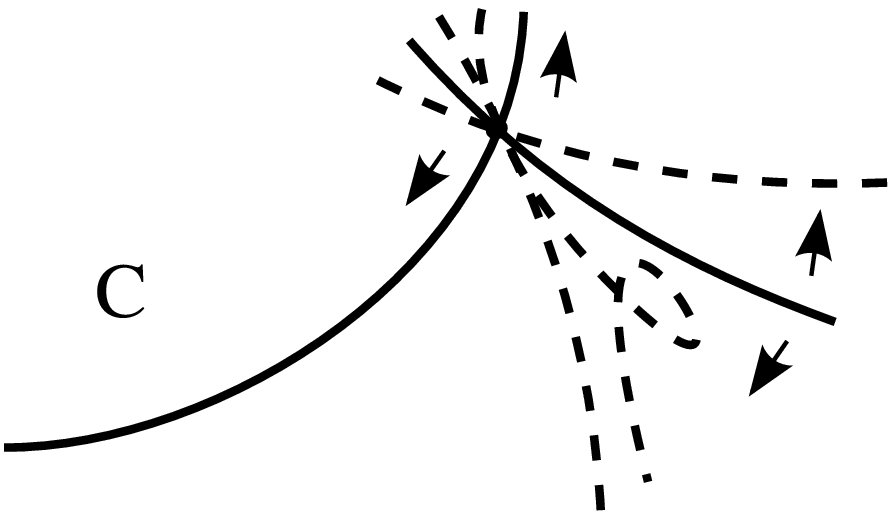}
  \caption{How the general fiber of a family in (S5) moves}
\end{figure}

The base of the family is $\PP^1 \times C$. Let us construct this family. We start from an elliptic pencil $Y\rightarrow \PP^1$ of degree $12$ with zero section $\sigma$. To construct $Y$, blow up $\PP^2$ in the nine points of intersection of two general cubics. Then consider $Y\times C$ and $\PP^1 \times C \times C$ and identify $\sigma\times C$ with $\PP^1 \times \Delta_C$. Let $x$ be the class of a point in $\PP^1$. 
Then
\begin{eqnarray*}
\lambda &=& \pi_1^*(x)\\
\delta_0 &=& 12\lambda\\
\delta_1 &=& -\pi_1^*(x)-\pi_2^*(K_C).
\end{eqnarray*}
Note that
\[
\delta_{0,g-1} = [12\pi_1^*(x)] [-\pi_2^*(K_C)].
\]

This surface is disjoint from $\Mbar^1_{2k,k}$. Indeed $C$ has no linear series with adjusted Brill-Noether number less than $-1$ at some point, and an elliptic curve or a rational nodal curve has no (generalized) linear series with adjusted Brill-Noether number less than $0$ at some point. Adding, we see that no fiber of the family has a linear series with Brill-Noether number less than $-1$,
hence
\[
(2(g-1)-2)\left[ 2A_{\kappa_1^2}-12 A_{\delta_{0,g-1}}+ 2A_{\delta_1^2} -A_{\lambda\delta_1} \right]=0.
\]

\vskip1pc
\item[(S6)]  For $3\leq i \leq g-3$ take a general curve $F$ of genus $i-1$ and attach at a general point $p$ an elliptic tail varying in a pencil of degree $12$ and at another general point a moving point on a general curve $C$ of genus $g-i$. 

\begin{figure}[htbp]
\psfrag{C}[b][b]{$C$}
\psfrag{F}[b][b]{$F$}
\centering
  \includegraphics[scale=0.5]{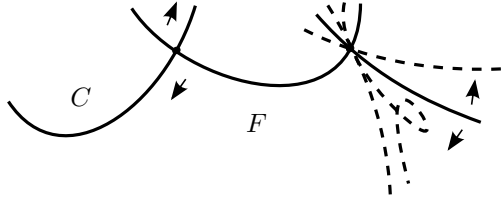}
  \caption{How the general fiber of a family in (S6) moves}
\end{figure}

The base of the family is $\PP^1 \times C$. In order to construct the family, start from $Y \times C$ and $\PP^1\times C\times C$ and identify $\sigma \times C$ and $\PP^1 \times \Delta_C$ with two general constant sections of $F\times \PP^1 \times C \rightarrow \PP^1\times C$. Here $Y,\sigma$ are as above.
Then
\begin{eqnarray*}
\lambda &=& \pi_1^*(x)\\
\delta_0 &=& 12 \lambda \\
\delta_1 &=& -\pi_1^*(x)\\
\delta_{g-i} &=& -\pi_2^*(K_{C}).
\end{eqnarray*}
Note that
\begin{eqnarray*}
\delta_{1,g-i} &=& [-\pi_1^*(x)][-\pi_2^*(K_{C})]\\
\delta_{0,g-i} &=& [12 \pi_1^*(x)][-\pi_2^*(K_{C})] .
\end{eqnarray*}

Again $C$ has no linear series with adjusted Brill-Noether number less than $-1$ at some point, an elliptic curve or a rational nodal curve has no (generalized) linear series with adjusted Brill-Noether number less than $0$ at some point and $F$ has no linear series with adjusted Brill-Noether number less than $0$ at some general points. Adding, we see that no fiber of the family has a linear series with Brill-Noether number less than $-1$,
hence
\begin{eqnarray*}
 (2(g-i)-2)\left[ 2 A_{\kappa_1^2} - A_{\lambda^{(i)}} + A_{\delta_{1,g-i}} - 12 A_{\delta_{0,g-i}} \right] = 0. 
\end{eqnarray*}

\noindent In case $i=g-2$ we have
\[
2\left[2A_{\kappa_1^2} - A_{\lambda \delta_2}+ A_{\delta_{1,2}} - 12 A_{\delta_{0,2}} \right]=0.
\]

\vskip1pc
\item[(S7)] Let $(E_1,p_1,q_1)$ and $(E_2,p_2,q_2)$ be two general pointed elliptic curves. Identify the point $q_i$ with a moving point $x_i$ in $E_i$, for $i=1,2$. Finally identify $p_1$ and $p_2$ with two general points $r_1,r_2$ on a general curve $F$ of genus $g-4$.

\begin{figure}[htbp]
\psfrag{F}[b][b]{$F$}
\psfrag{E1}[b][b]{$E_1$}
\psfrag{E2}[b][b]{$E_2$}
\centering
  \includegraphics[scale=0.5]{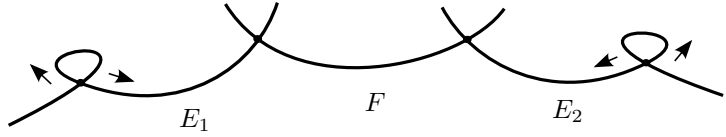}
  \caption{How the general fiber of a family in (S7) moves}
\end{figure}

The base of the family is $E_1\times E_2$. For $i=1,2$, let $\widetilde{E_i\times E_i}$ be the blow-up of $E_i\times E_i$ at $(p_i,p_i)$ and $(q_i,q_i)$. Denote by $\sigma_{p_i},\sigma_{q_i},\sigma_{\Delta_{E_i}}$ the proper transforms respectively of $p_i\times E_i,q_i\times E_i, \Delta_{E_i}$. The family is the union of $\widetilde{E_1\times E_1}\times E_2$, $E_1\times \widetilde{E_2\times E_2}$ and $F\times E_1\times E_2$ with the following identifications. First, $\sigma_{q_1}\times E_2$ and $E_1\times \sigma_{q_2}$ are identified respectively with $\sigma_{\Delta_{E_1}}\times E_2$ and $E_1\times \sigma_{\Delta_{E_2}}$. Then $\sigma_{p_1}\times E_2$ and $E_1\times \sigma_{p_2}$ are identified respectively with $r_1\times E_1\times E_2$ and $r_2\times E_1\times E_2$.
We deduce
\begin{eqnarray*}
\delta_0 &=& -\pi_1^*(2q_1)-\pi_2^*(2q_2)\\
\delta_1 &=& \pi_1^*(q_1)+\pi_2^*(q_2)\\
\delta_2 &=& -\pi_1^*(p_1)-\pi_2^*(p_2)
\end{eqnarray*}
and we note that
\begin{eqnarray*}
\delta_{2,2} &=& \pi_1^*(p_1) \pi_2^*(p_2) \\
\delta_{1,2} &=& -\pi_1^*(q_1)\pi_2^*(p_2)-\pi_2^*(q_2)\pi_1^*(p_1) \\
\delta_{1,1} &=& \pi_1^*(q_1) \pi_2^*(q_2)\\
\delta_{00} &=& \pi_1^*(2q_1)\pi_2^*(2q_2) \\
\delta_{02} &=& \pi_1^*(2q_1)\pi_2^*(p_2)+ \pi_2^*(2q_2)\pi_1^*(p_1)\\
\delta_{01} &=& -\pi_1^*(q_1)\pi_2^*(2q_2)-\pi_2^*(q_2)\pi_1^*(2q_1) .
\end{eqnarray*}

If a fiber of this family admits an admissible cover of degree $k$, then $r_1$ and $r_2$ have to be $2$-fold ramification points, and $q_i$ and $x_i$ have to be in the same fiber, for $i=1,2$. From Lemma \ref{ex} there are only $4$ fibers with this property, namely the fibers over $(p_1,p_2)$, $(p_1,\overline{q}_2)$, $(\overline{q}_1,p_2)$ and $(\overline{q}_1,\overline{q}_2)$, where $\overline{q}_i$ is such that $2p_i\equiv q_i+\overline{q}_i$ for $i=1,2$.

In these cases, the restriction of the covers to $E_1,E_2$ is uniquely determined up to isomorphism, while there are $N_{g-4,k,(0,1),(0,1)}$ choices for the restriction to $F$ up to isomorphism. As for (S3), such covers contribute with multiplicity one, hence we have the following relation
\begin{multline*}
8A_{\kappa_1^2} + A_{\delta_{2,2}} -2A_{\delta_{1,2}} + A_{\delta_{1,1}} + 2A_{\delta_1^2}+ 8A_{\delta_0^2} + 4 A_{\delta_{00}} + 4 A_{\delta_{02}}-4A_{\delta_{01}} \\
= 4 N_{g-4,k,(0,1),(0,1)}.
\end{multline*}

\vskip1pc
\item[(S8)] Consider a general curve $F$ of genus $g-2$ and attach at two general points elliptic tails varying in pencils of degree $12$. 

\begin{figure}[htbp]
\psfrag{F}[b][b]{$F$}
\centering
  \includegraphics[scale=0.5]{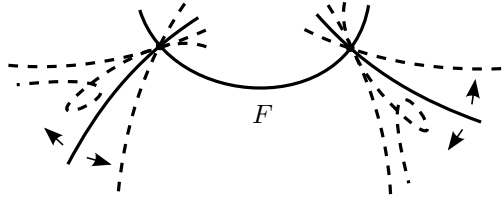}
  \caption{How the general fiber of a family in (S8) moves}
\end{figure}

The base of the family is $\PP^1 \times \PP^1$. Let us construct the family. Let $Y\rightarrow \PP^1$ and $Y'\rightarrow \PP^1$ be two elliptic pencils of degree $12$, and let $\sigma$ and $\sigma'$ be the respective zero sections. Consider $Y\times \PP^1$ and $\PP^1\times Y'$ and identify $\sigma\times \PP^1$ and $\PP^1 \times \sigma'$ with two general constant sections of $F\times \PP^1 \times \PP^1\rightarrow \PP^1\times \PP^1$.
If $x$ is the class of a point in $\PP^1$, then
\begin{eqnarray*}
\lambda &=& \pi_1^*(x)+\pi_2^*(x)\\
\delta_0 &=& 12 \lambda\\
\delta_1 &=& -\lambda.
\end{eqnarray*}
Note that
\begin{eqnarray*}
\delta_{00} &=& [12 \pi_1^*(x)][12\pi_2^*(x)] \\
\delta_{1,1} &=& [-\pi_1^*(x)][-\pi_2^*(x)]\\
\delta_{01} &=& [12 \pi_1^*(x)][-\pi_2^*(x)] +[-\pi_1^*(x)] [12 \pi_2^*(x)].
\end{eqnarray*}

Studying the possibilities for the adjusted Brill-Noether numbers of the aspects of limit linear series on some fiber of this family, we see that this surface is disjoint from $\Mbar^1_{2k,k}$, hence
\[
2A_{\kappa_1^2} +288A_{\delta_0^2}+24A_{\lambda \delta_0}+2A_{\delta_1^2}-2A_{\lambda \delta_1}+ 144A_{\delta_{00}}+ A_{\delta_{1,1}} -24 A_{\delta_{01}}=0.
\]

\vskip1pc
\item[(S9)] For $2\leq j \leq g-3$ let $R$ be a smooth rational curve, attach at the point $\infty\in R$ a general curve $F$ of genus $g-j-2$, attach at the points $0,1 \in R$ two elliptic tails $E_1,E_2$ and identify a moving point in $R$ with a moving point on a general curve $C$ of genus $j$. 

\begin{figure}[htbp]
\psfrag{F}[b][b]{$F$}
\psfrag{R}[b][b]{$R$}
\psfrag{C}[b][b]{$C$}
\psfrag{E1}[b][b]{$E_1$}
\psfrag{E2}[b][b]{$E_2$}
\centering
  \includegraphics[scale=0.5]{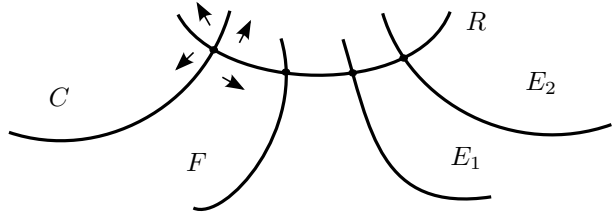}
  \caption{How the general fiber of a family in (S9) moves}
\end{figure}

The base of the family is $R \times C$. Let us start from a family $P\rightarrow R$ of four-pointed rational curves. Construct $P$ by blowing up $\PP^1\times \PP^1$ at $(0,0),(1,1)$ and $(\infty,\infty)$, and consider the sections $\sigma_0, \sigma_1,\sigma_{\infty}$ and $\sigma_{\Delta}$ corresponding to the proper transforms of $0\times \PP^1, 1\times \PP^1, \infty\times \PP^1$ and $\Delta_{\PP^1}$.

To construct the family over $R \times C$, consider $P\times C$ and $R\times C\times C$. Identify $\sigma_\Delta\times C$ with $R\times \Delta_C$. Finally identify $\sigma_0\times C,\sigma_1\times C$ and $\sigma_\infty\times C$ respectively with general constant sections of the families $E_1\times R\times C, E_2\times R\times C$ and $F\times R\times C$.
Then 
\begin{eqnarray*}
\delta_1 &=& -\pi_1^*(0+1)\\
\delta_2 &=& \pi_1^*(\infty)\\
\delta_j &=& -\pi_1^*(K_{\PP^1}+0+1+\infty)-\pi_2^*(K_{C})\\
\delta_{g-j-2} &=& -\pi_1^*(\infty)\\
\delta_{g-j-1} &=& \pi_1^*(0+1).
\end{eqnarray*}
If for some value of $j$ some of the above classes coincide (for instance, if $j=g-3$ then $\delta_1\equiv \delta_{g-j-2}$), then one has to sum up the contributions.
Note that
\begin{eqnarray*}
\delta_{1j} &=& [-\pi_1^*(0+1)][-\pi_2^*(K_{C})]\\
\delta_{j,g-j-2} &=& [-\pi_1^*(\infty)][-\pi_2^*(K_{C})] \\
\delta_{2,j} &=& [\pi_1^*(\infty)][-\pi_2^*(K_{C})]\\
\delta_{j,g-j-1} &=& [ \pi_1^*(0+1)][-\pi_2^*(K_{C})].
\end{eqnarray*}
As for (S8), this surface is disjoint from $\Mbar^1_{2k,k}$, hence
\begin{multline*}
(2j-2) \Big[ 2 A_{\kappa_1^2}+ 2  A_{\delta_{1j}}+A_{\delta_{j,g-j-2}}-A_{\delta_{2,j}}
-2A_{\delta_{j,g-j-1}}\\
{}- A_{\omega^{(j)}}-A_{\omega^{(g-j)}} \Big] = 0.
\end{multline*}
Again, let us remark that for some value of $j$, some terms add up.

\vskip1pc
\item[(S10)] Let $(R_1, 0, 1, \infty)$ and $(R_2, 0, 1, \infty)$ be two three-pointed smooth rational curves, identify a moving point on $R_1$ with a moving point on $R_2$, attach a general pointed curve $F$ of genus $g-5$ to $\infty\in R_2$ and attach elliptic tails to all the other marked points. 

\begin{figure}[htbp]
\psfrag{F}[b][b]{$F$}
\psfrag{R2}[b][b]{$R_2$}
\psfrag{R1}[b][b]{$R_1$}
\centering
  \includegraphics[scale=0.5]{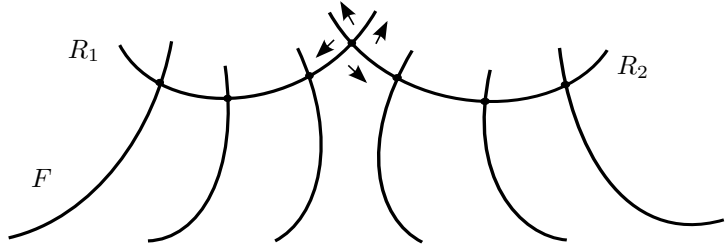}
  \caption{How the general fiber of a family in (S10) moves}
\end{figure}

The base of the family is $R_1\times R_2$. First construct two families of four-pointed rational curves $P_1\rightarrow R_1$ and $P_2\rightarrow R_2$ respectively with sections $\sigma_0, \sigma_1,\sigma_{\infty},\sigma_{\Delta}$ and $\tau_0, \tau_1,\tau_{\infty},\tau_{\Delta}$ as for the previous surface. Consider $P_1\times R_2$ and $R_1\times P_2$. Identify $\sigma_\Delta\times R_2$ with $R_1\times \tau_{\Delta}$. Finally identify $R_1\times \tau_{\infty}$ with a general constant section of $F\times R_1 \times R_2$ and identify $\sigma_0\times R_2,\sigma_1\times R_2,\sigma_\infty\times R_2,R_1\times \tau_0,R_1\times \tau_1$ with the respective zero sections of five constant elliptic fibrations over $R_1\times R_2$.

This surface is disjoint from $\Mbar^1_{2k,k}$.
For $g>8$
\begin{eqnarray*}
\delta_1 &=& -\pi_1^*(0+1+\infty)-\pi_2^*(0+1)\\
\delta_2 &=& \pi_1^*(0+1+\infty)+\pi_2^*(\infty)\\
\delta_3 &=& -\pi_1^*(K_{R_1}+0+1+\infty)-\pi_2^*(K_{R_2}+0+1+\infty)\\
\delta_{g-5} &=& -\pi_2^*(\infty)\\
\delta_{g-4} &=& \pi_2^*(0+1)
\end{eqnarray*}
and note the restriction of the following classes
\begin{eqnarray*}
\delta_{1,1} &=& [-\pi_1^*(0+1+\infty)][-\pi_2^*(0+1)] \\
\delta_{1,g-5} &=&  [-\pi_1^*(0+1+\infty)][-\pi_2^*(\infty)]\\
\delta_{1,3} &=&	[ -\pi_1^*(K_{R_1}+0+1+\infty)][-\pi_2^*(0+1)]	\\
\delta_{3,g-5} &=& 	[ -\pi_1^*(K_{R_1}+0+1+\infty)][ -\pi_2^*(\infty)]\\
\delta_{1,g-3} &=& [-\pi_1^*(0+1+\infty)][-\pi_2^*(K_{R_2}+0+1+\infty)]	\\
\delta_{2,g-3} &=& [\pi_1^*(0+1+\infty)][-\pi_2^*(K_{R_2}+0+1+\infty)] \\
\delta_{2,g-5} &=& [\pi_1^*(0+1+\infty)][ -\pi_2^*(\infty)]\\
\delta_{1,2} &=& [\pi_1^*(0+1+\infty)][-\pi_2^*(0+1)]+[-\pi_1^*(0+1+\infty)][\pi_2^*(\infty)]\\
\delta_{1,g-4} &=& [-\pi_1^*(0+1+\infty)][ \pi_2^*(0+1)]\\
\delta_{3,g-4} &=& [-\pi_1^*(K_{R_1}+0+1+\infty)][ \pi_2^*(0+1)]\\
\delta_{2,3} &=& [-\pi_1^*(K_{R_1}+0+1+\infty)][\pi_2^*(\infty)]\\
\delta_{2,g-4} &=& [ \pi_1^*(0+1+\infty)][\pi_2^*(0+1)]\\
\delta_{2,2} &=& [\pi_1^*(0+1+\infty)][\pi_2^*(\infty)].
\end{eqnarray*}
It follows that
\begin{multline*}
2A_{\kappa_1^2}+12 A_{\delta_1^2} + 6 A_{\delta_{1,1}} +3A_{\delta_{1,g-5}}+2A_{\delta_{1,3}}+A_{\delta_{3,g-5}}+3A_{\delta_{1,g-3}}\\
-A_{\omega^{(3)}}-A_{\omega^{(g-3)}} - 3(A_{\delta_{2,g-3}} +A_{\delta_{2,g-5}}+2A_{\delta_{1,2}})\\
 -2(3A_{\delta_{1,g-4}}+A_{\delta_{3,g-4}}) - (3A_{\delta_{1,2}}+A_{\delta_{2,3}})
  +6A_{\delta_{2,g-4}}+ 3A_{\delta_{2,2}} =0.
\end{multline*}
For $g=6$ the coefficient of $A_{\delta_1^2}$ is $18$. When $g\in \{6,7,8\}$, note that some terms add up.

\vskip1pc
\item[(S11)] Consider a general curve $F$ of genus $g-4$, attach at a general point an elliptic tail varying in a pencil of degree $12$ and identify a second general point with a moving point on a rational three-pointed curve $(R,0,1,\infty)$. Attach elliptic tails at the marked point on the rational curve. 

\begin{figure}[htbp]
\psfrag{F}[b][b]{$F$}
\psfrag{R}[b][b]{$R$}
\centering
  \includegraphics[scale=0.5]{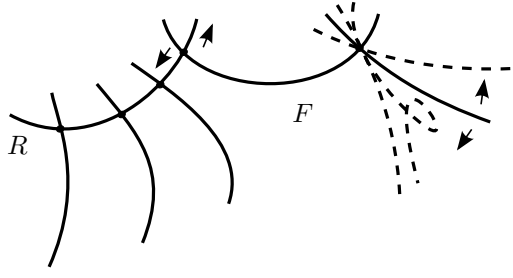}
  \caption{How the general fiber of a family in (S11) moves}
\end{figure}

The base of the family is $\PP^1\times R$. Consider the elliptic fibration $Y$ over $\PP^1$ with zero section $\sigma$ as in (S5), and the family $P$ over $R$ with sections $\sigma_0,\sigma_1,\sigma_\infty,\sigma_\Delta$ as in (S9). Identify $\sigma\times R \subset Y\times R$ and $\PP^1\times \sigma_\Delta\subset \PP^1 \times P$ with two general constant sections of $F\times \PP^1 \times R$. Finally identify $\PP^1\times \sigma_0,\PP^1\times \sigma_1,\PP^1\times \sigma_\infty\subset \PP^1 \times P$ with the respective zero sections of three constant elliptic fibrations over $\PP^1 \times R$.
Then
\begin{eqnarray*}
\lambda &=& \pi_1^*(x)\\
\delta_{0} &=& 12 \lambda \\
\delta_1 &=& -\pi_1^*(x)-\pi_2^*(0+1+\infty)\\
\delta_2 &=& \pi_2^*(0+1+\infty)\\
\delta_3 &=& -\pi_2^*(K_R +0+1+\infty).
\end{eqnarray*}
Note the restriction of the following classes
\begin{eqnarray*}
\delta_{1,1} &=& [-\pi_1^*(x)][-\pi_2^*(0+1+\infty)]\\
\delta_{1,3} &=& [-\pi_1^*(x)][ -\pi_2^*(K_R +0+1+\infty)]\\
\delta_{01} &=& [12\pi_1^*(x)][-\pi_2^*(0+1+\infty)]\\
\delta_{03} &=& [12\pi_1^*(x)][-\pi_2^*(K_R +0+1+\infty)]\\
\delta_{02} &=&  [12\pi_1^*(x)][\pi_2^*(0+1+\infty)]\\
\delta_{1,2} &=& [ -\pi_1^*(x)][\pi_2^*(0+1+\infty)].
\end{eqnarray*}
This surface is disjoint from $\Mbar^1_{2k,k}$, hence
\begin{multline*}
2 A_{\kappa_1^2} -A_{\lambda^{(g-3)}}+6A_{\delta_1^2}+3A_{\delta_{1,1}}-3 A_{\lambda \delta_1}+A_{\delta_{1,3}}-36A_{\delta_{01}}-12A_{\delta_{03}} \\
+3\left[A_{\lambda\delta_2}+12A_{\delta_{02}}-A_{\delta_{1,2}}\right] =0.
\end{multline*}

\vskip1pc
\item[(S12)] Let $R$ be a rational curve, attach at the points $0$ and $1$ two fixed elliptic tails, attach at the point $\infty$ an elliptic tail moving in a pencil of degree $12$ and identify a moving point in $R$ with a general point on a general curve $F$ of genus $g-3$. 

\begin{figure}[htbp]
\psfrag{F}[b][b]{$F$}
\psfrag{R}[b][b]{$R$}
\centering
  \includegraphics[scale=0.5]{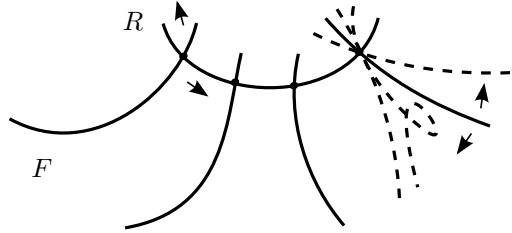}
  \caption{How the general fiber of a family in (S12) moves}
\end{figure}

The base of the family is $\PP^1 \times R$. Let $Y,\sigma$ and $P,\sigma_0,\sigma_1,\sigma_\infty,\sigma_\Delta$ be as above. Identify $\sigma\times R\subset Y\times R$ with $\PP^1\times \sigma_\infty\subset \PP^1 \times P$, and $\PP^1\times \sigma_\Delta\subset \PP^1\times P$ with a general constant section of $F\times\PP^1\times R$. Finally identify $\PP^1\times \sigma_0,\PP^1\times \sigma_1$ with the zero sections of two constant elliptic fibrations over $\PP^1\times R$.
Then
\begin{eqnarray*}
\lambda &=& \pi_1^*(x)\\
\delta_0 &=& 12 \lambda\\
\delta_1 &=& -\pi_1^*(x)-\pi_2^*(\infty+0+1)\\
\delta_2 &=& \pi_2^*(\infty+0+1)\\
\delta_3 &=& -\pi_2^*(K_{\PP^1}+0+1+\infty).
\end{eqnarray*}
Let us note the following restrictions
\begin{eqnarray*}
\delta_{01} &=& [12\pi_1^*(x)][-\pi_2^*(0+1)]\\
\delta_{0,g-3} &=& [12\pi_1^*(x)][-\pi_2^*(K_{\PP^1}+0+1+\infty)]\\
\delta_{0,g-1} &=&  [12\pi_1^*(x)][-\pi_2^*(\infty)]\\
\delta_{1,1} &=&  [-\pi_1^*(x)][-\pi_2^*(0+1)]\\
\delta_{1,g-3} &=& [-\pi_1^*(x)][-\pi_2^*(K_{\PP^1}+0+1+\infty)]\\
\delta_{0,g-2} &=& [12\pi_1^*(x)][ \pi_2^*(0+1)]\\
\delta_{1,g-2} &=& [-\pi_1^*(x)][ \pi_2^*(0+1)]\\
\delta_{02} &=& [12\pi_1^*(x)][\pi_2^*(\infty)]\\
\delta_{1,2} &=& [-\pi_1^*(x)][ \pi_2^*(\infty)].
\end{eqnarray*}
This surface is disjoint from $\Mbar^1_{2k,k}$, hence
\begin{multline*}
2A_{\kappa_1^2}-3A_{\lambda \delta_1}-24A_{\delta_{01}}-12A_{\delta_{0,g-3}}-12A_{\delta_{0,g-1}}+6A_{\delta_1^2}+2A_{\delta_{1,1}}+A_{\delta_{1,g-3}}\\
-A_{\lambda^{(3)}} +2(A_{\lambda\delta_2}+12A_{\delta_{0,g-2}}-A_{\delta_{1,g-2}})+(A_{\lambda\delta_2}+12A_{\delta_{02}}-A_{\delta_{1,2}})=0.
\end{multline*}

\vskip1pc
\item[(S13)]  Let $(C,p,q)$ be a general two-pointed curve of genus $g-3$ and identify the point $q$ with a moving point $x$ on $C$. Let $(E,r,s)$ be a general two-pointed elliptic curve and identify the point $s$ with a moving point $y$ on $E$. Finally identify the points $p$ and $r$. 

\begin{figure}[htbp]
\psfrag{C}[b][b]{$C$}
\psfrag{E}[b][b]{$E$}
\centering
  \includegraphics[scale=0.5]{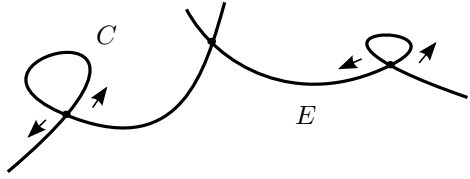}
  \caption{How the general fiber of a family in (S13) moves}
\end{figure}

The base of the family is $C\times E$. Let $\widetilde{C\times C}$ (respectively $\widetilde{E\times E}$) be the blow-up of $C\times C$ at $(p,p)$ and $(q,q)$ (respectively of $E\times E$ at $(r,r)$ and $(s,s)$). Let $\tau_p,\tau_q,\tau_\Delta$ (respectively $\sigma_r,\sigma_s,\sigma_\Delta$) be the proper transform of $p\times C,q\times C,\Delta_C$ (respectively $r\times E,s\times E,\Delta_E$) and identify $\tau_q$ with $\tau_\Delta$ (respectively $\sigma_s$ with $\sigma_\Delta$). Finally identify $\tau_p\times E$ with $C\times \sigma_r$.
Then from the proof of Lemma \ref{ell}, we have 
\begin{eqnarray*}
\delta_0 &=& -\pi_1^*(K_C+2q)-\pi_2^*(2s)\\
\delta_1 &=& \pi_1^*(q) + \pi_2^*(s)\\
\delta_2 &=& -\pi_1^*(p)-\pi_2^*(r)
\end{eqnarray*}
and note that
\begin{eqnarray*}
\delta_{00} &=& [-\pi_1^*(K_C+2q)][-\pi_2^*(2s)] \\
\delta_{02} &=& [-\pi_1^*(K_C+2q)][-\pi_2^*(r)] \\
\delta_{0,g-2} &=& [-\pi_2^*(2s)][-\pi_1^*(p)] \\
\delta_{01} &=& [ -\pi_1^*(K_C+2q)][ \pi_2^*(s)]+[-\pi_2^*(2s)][ \pi_1^*(q)] \\
\delta_{1,g-2} &=& [-\pi_1^*(p)][\pi_2^*(s)] \\
\delta_{1,2} &=& [ \pi_1^*(q)][-\pi_2^*(r)] \\
\delta_{1,1} &=& [ \pi_1^*(q)][\pi_2^*(s)] .
\end{eqnarray*}

If a fiber of this family admits an admissible covering of degree $k$, then such a covering has a $2$-fold ramification at the point $p\sim r$, $q$ is in the same fiber as $x$, and $s$ is in the same fiber as $y$. By Lemma \ref{ex} and Lemma \ref{ell} there are $2$ points in $E$ and $\ell_{g-2,k}$ points in $C$ with such a property, and the cover is unique up to isomorphism. Reasoning as in (S3), one shows that each cover contributes with multiplicity one.
It follows that
\begin{multline*}
2(g-3)\left[4A_{\kappa_1^2} +2A_{\delta_{00}}+4A_{\delta_0^2}+A_{\delta_{02}} \right] +2A_{\delta_{0,g-2}}-A_{\omega^{(2)}}-A_{\omega^{(g-2)}}\\
-\left[2(g-3)A_{\delta_{01}}+A_{\delta_{1,g-2}}  \right] -\left[2A_{\delta_{01}}+A_{\delta_{1,2}}  \right] +\left[ A_{\delta_{1,1}}+2A_{\delta_1^2} \right] = 2  \cdot \ell_{g-2,k}.
\end{multline*}

\vskip1pc
\item[(S14)] Let $(C,p,q)$ be a general two-pointed curve of genus $g-2$, attach at $p$ an elliptic tail moving in a pencil of degree $12$ and identify $q$ with a moving point on $C$. 

\begin{figure}[htbp]
\psfrag{C}[b][b]{$C$}
\centering
  \includegraphics[scale=0.5]{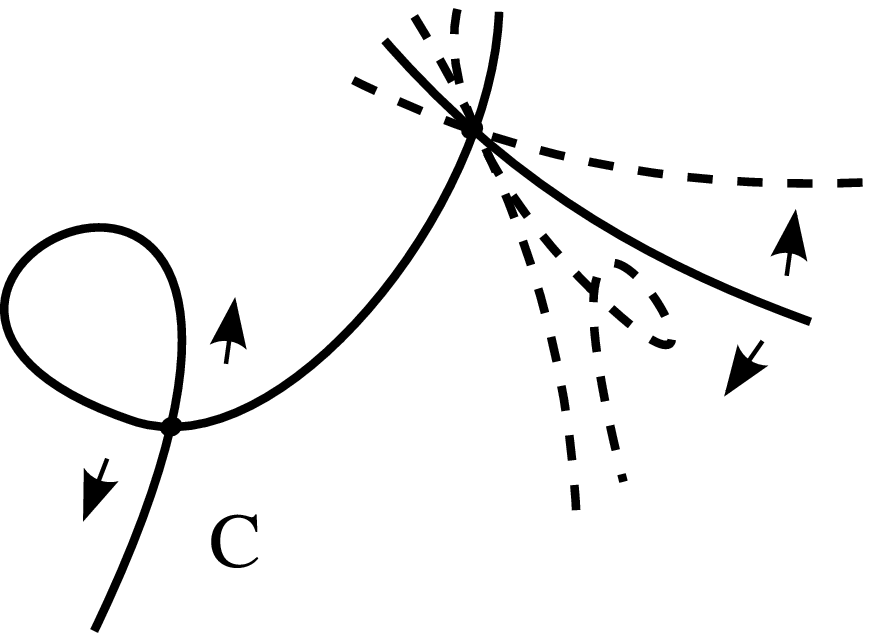}
  \caption{How the general fiber of a family in (S14) moves}
\end{figure}

The base of this family is $C\times \PP^1$. Let $\widetilde{C\times C}$ be the blow-up of $C\times C$ at the points $(p,p)$ and $(q,q)$. Let $\tau_p,\tau_q,\tau_\Delta$ be the proper transform of $p\times C,q\times C,\Delta$ and identify $\tau_q$ with $\tau_\Delta$. Then consider $Y,\sigma$ as in (S5) and identify $C\times\sigma$ with $\tau_p\times \PP^1$.
Then
\begin{eqnarray*}
\lambda &=& \pi_2^*(x)\\
\delta_0 &=& 12\lambda -\pi_1^*(K_C+2q)\\
\delta_1 &=& \pi_1^*(q)-\pi_1^*(p)-\lambda.
\end{eqnarray*}
Note that
\begin{eqnarray*}
\delta_{00} &=& [12\pi_2^*(x)][ -\pi_1^*(K_C+2q)]\\
\delta_{01} &=& [ \pi_1^*(q)][12 \pi_2^*(x)]+[-\pi_1^*(K_C+2q)][-\pi_2^*(x)]\\
\delta_{0,g-1} &=& [-\pi_1^*(p)][12\pi_2^*(x)]\\
\delta_{1,1} &=& [ \pi_1^*(q)][- \pi_2^*(x)].
\end{eqnarray*}
This surface is disjoint from $\Mbar^1_{2k,k}$, hence
\begin{multline*}
(2g-4)\left[2A_{\kappa_1^2}-A_{\lambda \delta_0} -24A_{\delta_0^2}-12A_{\delta_{00}} +A_{\delta_{01}}\right]\\
-12A_{\delta_{0,g-1}} + (12A_{\delta_{01}}-A_{\delta_{1,1}})
=0.
\end{multline*}

\vskip1pc

\item[(S15)] Let $C$ be a general curve of genus $g-1$ and consider the surface $C \times C$ with fiber $C/(p\sim q)$ over $(p,q)$. 

\begin{figure}[htbp]
\psfrag{C}[b][b]{$C$}
\centering
  \includegraphics[scale=0.5]{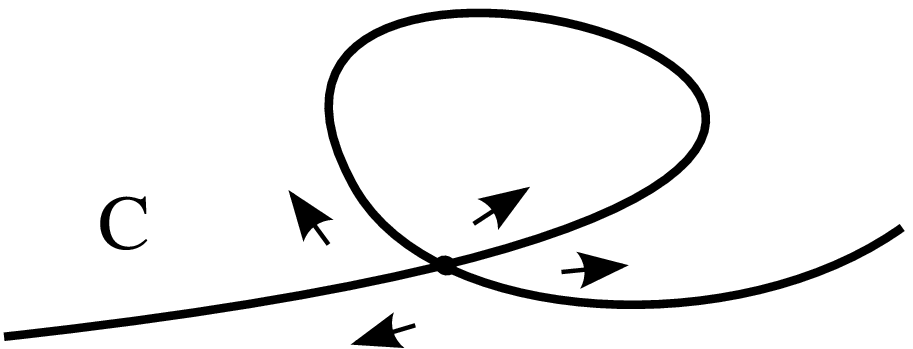}
  \caption{How the general fiber of a family in (S15) moves}
\end{figure}

To construct the family, start from $p_{2,3}\colon C\times C\times C\rightarrow C\times C$, blow up the diagonal $\Delta\subset C\times C\times C$ and then identify the proper transform of $\Delta_{1,2}:=p_{1,2}^*(\Delta)$ with the proper transform of $\Delta_{1,3}:=p_{1,3}^*(\Delta)$.
Then
\begin{eqnarray*}
\delta_0 &=&  -(\pi_1^*K_C + \pi_2^* K_C + 2\Delta)\\
\delta_1 &=& \Delta.
\end{eqnarray*}
The class $\kappa_2$ has been computed in \cite[\S 2.1 (1)]{MR1070600}. 
The curve $C$ has no generalized linear series with Brill-Noether number less than $0$, hence
\[
( 8 g^2 - 26 g + 20)A_{\kappa_1^2} +(2g-4)A_{\kappa_2} + (4-2g)A_{\delta_1^2} + 8(g-1)(g-2)A_{\delta_0^2} =0.
\]

\vskip1pc
\item[(S16)] For $\lfloor g/2 \rfloor \leq i \leq g-2$, take a general curve $C$ of genus $i$ and attach an elliptic curve $E$ and a general pointed curve $F$ of genus $g-i-1$ at two varying points in $C$. 

\begin{figure}[htbp]
\psfrag{C}[b][b]{$C$}
\psfrag{F}[b][b]{$F$}
\psfrag{E}[b][b]{$E$}
\centering
  \includegraphics[scale=0.5]{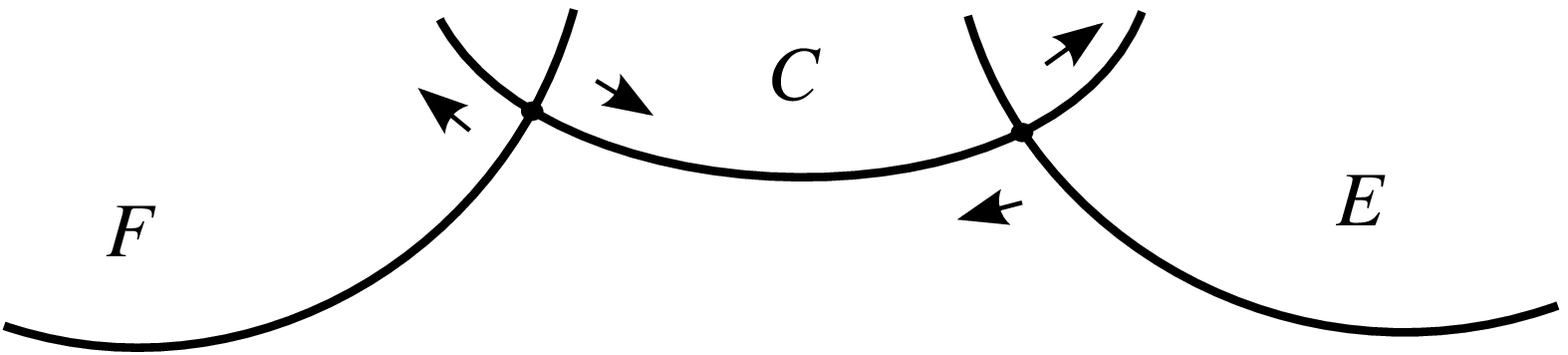}
  \caption{How the general fiber of a family in (S16) moves}
\end{figure}

To construct the family, blow up the diagonal $\Delta$ in $C\times C\times C$ as before, and then identify the proper transform of $\Delta_{1,2}$ with the zero section of a constant elliptic fibration over $C\times C$, and identify the proper transform of $\Delta_{1,3}$ with a general constant section of $F\times C\times C$.
For $i<g-2$
\begin{eqnarray*}
\delta_1 &=& -\pi_1^*K_C -\Delta \\
\delta_{g-i-1} &=& - \pi_2^* K_C - \Delta \\
\delta_i &=& \Delta
\end{eqnarray*}
while for $i=g-2$ the $\delta_1$ is the sum of the above $\delta_1$ and $\delta_{g-i-1}$.

Note that replacing the tail of genus $g-i-1$ with an elliptic tail does not affect the computation of the class $\kappa_2$, hence we can use the count from \cite[\S 3 ($\gamma$)]{MR1078265}, that is $\kappa_2 = 2i-2$. About the $\omega$ classes, on these test surfaces one has $\omega^{(i)}= - \delta_i^2$ and $\omega^{(i+1)}=-\delta^2_{i+1}=-\delta_{g-i-1}^2$. Finally note that $\delta_{1,g-i-1}$ is the product of the $c_1$'s coming from the two nodes, that is, $\delta_{1,g-i-1}=\delta_1\delta_{g-i-1}$.

If a fiber of this family has a $\g^1_k$ limit linear series $\{l_E,l_C,l_F \}$, then necessarily the adjusted Brill-Noether number has to be zero on $F$ and $E$, and $-2$ on $C$. Note that in any case $l_E=|2\cdot 0_E|$. From \S \ref{m} there are 
\[
\mathop{\sum_{\alpha=(\alpha_0,\alpha_1)}}_{\rho(i,1,k, \alpha) = -1} m_{i,k,\alpha}
\]
pairs in $C$ with such a property, $l_C$ is also uniquely determined and there are $N_{g-i-1,d,(d-1-\alpha_1,d-1-\alpha_0)}$ choices for $l_F$. With a similar argument to (S2), such pairs contribute with multiplicity one.

All in all for $i<g-2$
\begin{multline*}
(2i-2) \big[(4i-1)A_{\kappa_1^2} + A_{\kappa_2}+A_{\omega^{(i)}} -A_{\omega^{(i+1)}}
+A_{\delta_1^2}+(2i-1)A_{\delta_{1,g-i-1}}\big] \\
= \mathop{\sum_{0\leq \alpha_0\leq \alpha_1\leq k-1}}_{\alpha_0 + \alpha_1 = g-i-1} m_{i,k,(\alpha_0,\alpha_1)}\cdot  N_{g-i-1,k,(k-1-\alpha_1,k-1-\alpha_0)}
\end{multline*}
while for $i=g-2$
\begin{multline*}
(2g-6) \big[(4g-9)A_{\kappa_1^2} + A_{\kappa_2}+A_{\omega^{(g-2)}}
+(4g-8)A_{\delta_1^2}+(2g-5)A_{\delta_{1,1}}\big] \\
=m_{g-2,k,(0,1)}.
\end{multline*}

\vskip1pc
\item[(S17)] Consider a general element in $\theta_1$, vary the elliptic curve in a pencil of degree $12$ and vary one point on the elliptic curve.

\begin{figure}[htbp]
\psfrag{C}[b][b]{$C$}
\centering
  \includegraphics[scale=0.5]{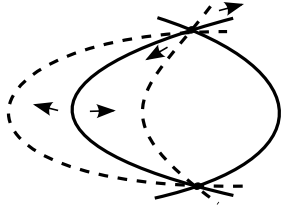}
  \caption{How the general fiber of a family in (S17) moves}
\end{figure}

The base of this family is the blow up of $\PP^2$ in the nine points of intersection of two general cubic curves. Let us denote by $H$ the pull-back of an hyperplane section in $\PP^2$, by $\Sigma$ the sum of the nine exceptional divisors and by $E_0$ one of them. We have
\begin{eqnarray*}
\lambda &=& 3H - \Sigma\\
\delta_0 &=& 30H - 10\Sigma -2E_0\\
\delta_1 &=& E_0
\end{eqnarray*}
(see also \cite[\S 2 (9)]{MR1023390}). Replacing the component of genus $g-2$ with a curve of genus $2$, we obtain a surface in $\Mbar_4$. The computation of the class $\kappa_2$ remains unaltered, that is $\kappa_2 = 1$ (see \cite[\S 3 ($\iota$)]{MR1078265}). Similarly for $\delta_{00}$ and $\theta_1$, while $\delta_{0,g-1}$ correspond to the value of $\delta_{01a}$ on the surface in $\Mbar_4$.

Let us study the intersection with $\Mbar^1_{2k,k}$. An admissible cover for some fiber of this family would necessarily have the two nodes in the same fiber, which is impossible, since the two points are general on the component of genus $g-2$. We deduce the following relation
\begin{eqnarray*}
3A_{\kappa_1^2} + A_{\kappa_2} -2 A_{\lambda\delta_0} +A_{\lambda\delta_1} -44 A_{\delta_0^2} -A_{\delta_1^2} + 12A_{\delta_{0,g-1}} -12A_{\delta_{00}} +A_{\theta_1}=0.
\end{eqnarray*}

\vskip1pc
\item[(S18)] 
For $2\leq i \leq \lfloor (g+1)/2 \rfloor $ we consider a general curve of type $\delta_{i-1,g-i}$ and we vary the central elliptic curve $E$ in a pencil of degree $12$ and one of the points on $E$. 

\begin{figure}[htbp]
\centering
  \includegraphics[scale=0.5]{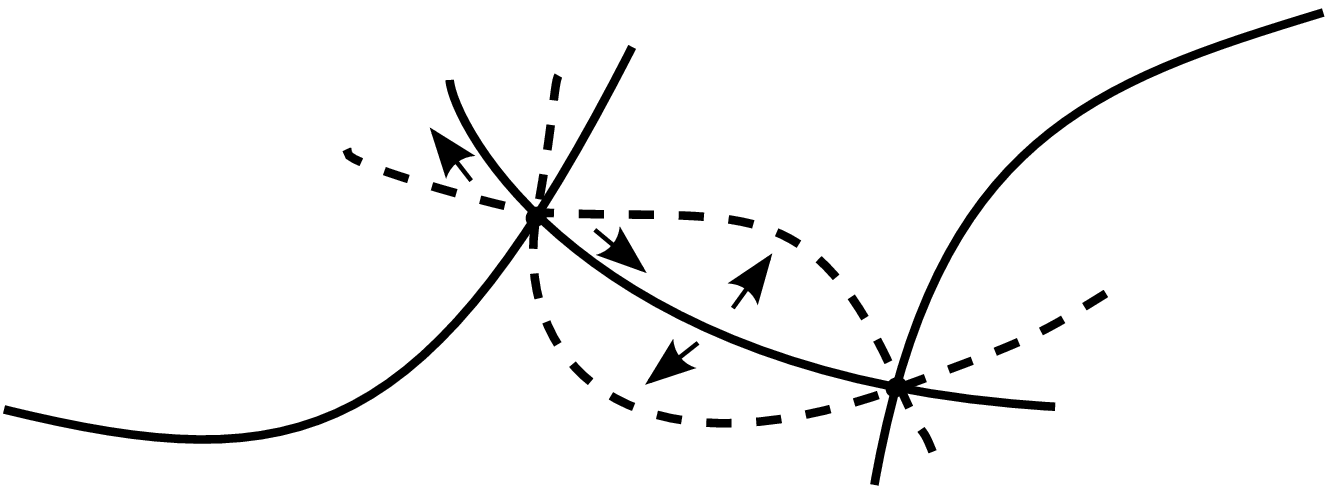}
  \caption{How the general fiber of a family in (S18) moves}
\end{figure}

The base of this family is the same surface as in (S17). For $i\geq 3$ we have
\begin{eqnarray*}
\lambda &=& 3H - \Sigma\\
\delta_0 &=& 12 \lambda\\
\delta_1 &=& E_0\\
\delta_{i-1} &=& -3H +\Sigma -E_0\\
\delta_{g-i} &=& -3H +\Sigma -E_0 
\end{eqnarray*}
while for $i=2$ the $\delta_1$ is the sum of the above $\delta_1$ and $\delta_{i-1}$, that is $\delta_1=-3H+\Sigma$ (see also \cite[\S 3 ($\lambda$)]{MR1078265}). 

Note that replacing the two tails of genus $i-1$ and $g-i$ with tails of genus $1$ and $2$, we obtain a surface in $\Mbar_4$. The computation of the class $\kappa_2$ remains unaltered, that is $\kappa_2 = 1$ (see \cite[\S 3 ($\lambda$)]{MR1078265}). Moreover, on these test surfaces $\omega^{(i)}= -\delta^2_{i}=- \delta_{g-i}^2$ and for $i\geq 3$, $\omega^{(g-i+1)}=-\delta^2_{g-i+1}=-\delta_{i-1}^2$ hold, while $\lambda^{(i)}=\lambda\delta_i=\lambda\delta_{g-i}$ for $i\geq 3$ and $\lambda^{(g-i+1)}=\lambda \delta_{g-i+1}=\lambda\delta_{i-1}$ for $i\geq 4$. 
All fibers are in $\delta_{i-1,g-i}$, hence $\delta_{i-1,g-i}$ is the product of the $c_1$'s of the two nodes, that is, $\delta_{i-1,g-i}=\delta_{i-1}\cdot\delta_{g-i}$. Note that on these surfaces, $\delta_{0,i-1}=\delta_0\delta_{i-1}$ and $\delta_{0,g-i}=\delta_0\delta_{g-i}$.
There are exactly $12$ fibers which contribute to $\theta_{i-1}$, namely when the elliptic curve degenerates into a rational nodal curve and the moving point hits the non-disconnecting node. Similarly, there are $12$ fibers which contribute to $\delta_{0,g-1}$, namely when the elliptic curve degenerates into a rational nodal curve and the moving point hits the disconnecting node.

These surfaces are disjoint from $\Mbar^1_{2k,k}$. Indeed the two tails of genus $i-1$ and $g-i$ have no linear series with adjusted Brill-Noether number less than $0$ at general points. Moreover an elliptic curve has no $\g^1_k$ with adjusted Brill-Noether number less than $-1$ at two arbitrary points. Finally a rational nodal curve has no generalized linear series with adjusted Brill-Noether number less than $0$ at arbitrary points.

It follows that for $i\geq 4$ we have
\begin{multline*}
3A_{\kappa_1^2}+A_{\kappa_2}-A_{\omega^{(i)}}-A_{\omega^{(g-i+1)}}-A_{\delta_1^2}+A_{\delta_{i-1,g-i}}-A_{\lambda^{(i)}}-A_{\lambda^{(g-i+1)}}\\
+A_{\lambda\delta_1} -12A_{\delta_{0,i-1}} -12A_{\delta_{0,g-i}} +12A_{\delta_{0,g-1}}+12A_{\theta_{i-1}}=0,
\end{multline*}
when $i=3$ 
\begin{multline*}
3A_{\kappa_1^2}+A_{\kappa_2}-A_{\omega^{(3)}}-A_{\omega^{(g-2)}}-A_{\delta_1^2}+A_{\delta_{2,g-3}}-A_{\lambda^{(3)}}-A_{\lambda\delta_2}\\
+A_{\lambda\delta_1} -12A_{\delta_{0,2}} -12A_{\delta_{0,g-3}} +12A_{\delta_{0,g-1}}+12A_{\theta_2}=0,
\end{multline*}
and when $i=2$
\begin{multline*}
3A_{\kappa_1^2}+A_{\kappa_2}-A_{\omega^{(2)}}+A_{\delta_{1,g-2}} \\
-A_{\lambda\delta_2}-12A_{\delta_{0,1}} -12A_{\delta_{0,g-2}} +12A_{\delta_{0,g-1}}+12A_{\theta_1}=0.
\end{multline*}

\end{enumerate} }

\section{Non-singularity}
\label{result}

In (S1)-(S18) we have constructed
\[
\lfloor (g^2-1)/4\rfloor + 3g-1
\]
linear relations in the coefficients $A$. Let us collect here all the relations.
For $2\leq i\leq \lfloor g/2\rfloor$ from (S1) we obtain
\[
2A_{\kappa_1^2} - A_{\omega^{(i)}} - A_{\omega^{(g-i)}} =\frac{T_i}{(2i-2)(2(g-i)-2)} , 
\]
from (S2) for $2\leq i\leq j \leq g-3$ and $i+j\leq g-1$
\[
2 A_{\kappa_1^2}+ A_{\delta_{ij}}  = \frac{D_{ij}}{(2i-2)(2j-2) },
\]
from (S3)
\[
4A_{\kappa_1^2} -A_{\omega^{(2)}}-A_{\omega^{(g-2)}}- A_{\delta_{1,g-2}} + 2 A_{\delta_{0,g-2}} = \frac{n_{g-2,k,(0,1)}}{g-3 },
\]
from (S4) for $2\leq i \leq g-3$
\[
4A_{\kappa_1^2} - A_{\delta_{1,i}} + 2 A_{\delta_{0,i}} +A_{\delta_{2,i}}  = \frac{D_{2,i}}{6(i-1) },
\]
from (S5)
\[
2A_{\kappa_1^2}-12 A_{\delta_{0,g-1}}+ 2A_{\delta_1^2} -A_{\lambda\delta_1} =0,
\]
from (S6) for $3\leq i \leq g-3$
\begin{eqnarray*}
2 A_{\kappa_1^2} - A_{\lambda^{(i)}} + A_{\delta_{1,g-i}} - 12 A_{\delta_{0,g-i}} = 0
\end{eqnarray*}
and
\[
2A_{\kappa_1^2} - A_{\lambda \delta_2}+ A_{\delta_{1,2}} - 12 A_{\delta_{0,2}} =0,
\]
from (S7)
\begin{multline*}
8A_{\kappa_1^2} + A_{\delta_{2,2}} -2A_{\delta_{1,2}} + A_{\delta_{1,1}} + 2A_{\delta_1^2}+ 8A_{\delta_0^2} + 4 A_{\delta_{00}} + 4 A_{\delta_{02}}-4A_{\delta_{01}} \\
= 4 N_{g-4,k,(0,1),(0,1)},
\end{multline*}
from (S8)
\[
2A_{\kappa_1^2} +288A_{\delta_0^2}+24A_{\lambda \delta_0}+2A_{\delta_1^2}-2A_{\lambda \delta_1}+ 144A_{\delta_{00}}+ A_{\delta_{1,1}} -24 A_{\delta_{01}}=0,
\]
from (S9) for $2\leq j \leq g-3$
\[
2 A_{\kappa_1^2}+ 2  A_{\delta_{1j}}+A_{\delta_{j,g-j-2}}-A_{\delta_{2,j}}-2A_{\delta_{j,g-j-1}}
- A_{\omega^{(j)}}-A_{\omega^{(g-j)}}  = 0,
\]
from (S10) for $g>6$
\begin{multline*}
2A_{\kappa_1^2}+12 A_{\delta_1^2} + 6 A_{\delta_{1,1}} +3A_{\delta_{1,g-5}}+2A_{\delta_{1,3}}+A_{\delta_{3,g-5}}+3A_{\delta_{1,g-3}}\\
-A_{\omega^{(3)}}-A_{\omega^{(g-3)}} - 3(A_{\delta_{2,g-3}} +A_{\delta_{2,g-5}}+2A_{\delta_{1,2}})\\
 -2(3A_{\delta_{1,g-4}}+A_{\delta_{3,g-4}}) - (3A_{\delta_{1,2}}+A_{\delta_{2,3}})
  +6A_{\delta_{2,g-4}}+ 3A_{\delta_{2,2}} =0
\end{multline*}
while for $g=6$
\begin{multline*}
2A_{\kappa_1^2}+18 A_{\delta_1^2} + 6 A_{\delta_{1,1}} +3A_{\delta_{1,g-5}}+2A_{\delta_{1,3}}+A_{\delta_{3,g-5}}+3A_{\delta_{1,g-3}}\\
-A_{\omega^{(3)}}-A_{\omega^{(g-3)}} - 3(A_{\delta_{2,g-3}} +A_{\delta_{2,g-5}}+2A_{\delta_{1,2}})\\
 -2(3A_{\delta_{1,g-4}}+A_{\delta_{3,g-4}}) - (3A_{\delta_{1,2}}+A_{\delta_{2,3}})
  +6A_{\delta_{2,g-4}}+ 3A_{\delta_{2,2}} =0,
\end{multline*}
from (S11)
\begin{multline*}
2 A_{\kappa_1^2} -A_{\lambda^{(g-3)}}+6A_{\delta_1^2}+3A_{\delta_{1,1}}-3 A_{\lambda \delta_1}+A_{\delta_{1,3}}-36A_{\delta_{01}}-12A_{\delta_{03}} \\
+3\left[A_{\lambda\delta_2}+12A_{\delta_{02}}-A_{\delta_{1,2}}\right] =0,
\end{multline*}
from (S12)
\begin{multline*}
2A_{\kappa_1^2}-3A_{\lambda \delta_1}-24A_{\delta_{01}}-12A_{\delta_{0,g-3}}-12A_{\delta_{0,g-1}}+6A_{\delta_1^2}+2A_{\delta_{1,1}}+A_{\delta_{1,g-3}}\\
-A_{\lambda^{(3)}} +2(A_{\lambda\delta_2}+12A_{\delta_{0,g-2}}-A_{\delta_{1,g-2}})+(A_{\lambda\delta_2}+12A_{\delta_{02}}-A_{\delta_{1,2}})=0,
\end{multline*}
from (S13)
\begin{multline*}
2(g-3)\left[4A_{\kappa_1^2} +2A_{\delta_{00}}+4A_{\delta_0^2}+A_{\delta_{02}} \right] +2A_{\delta_{0,g-2}}-A_{\omega^{(2)}}-A_{\omega^{(g-2)}}\\
-\left[2(g-3)A_{\delta_{01}}+A_{\delta_{1,g-2}}  \right] -\left[2A_{\delta_{01}}+A_{\delta_{1,2}}  \right] +\left[ A_{\delta_{1,1}}+2A_{\delta_1^2} \right] = 2  \cdot \ell_{g-2,k},
\end{multline*}
from (S14)
\begin{multline*}
(2g-4)\left[2A_{\kappa_1^2}-A_{\lambda \delta_0} -24A_{\delta_0^2}-12A_{\delta_{00}} +A_{\delta_{01}}\right]\\
-12A_{\delta_{0,g-1}} + (12A_{\delta_{01}}-A_{\delta_{1,1}})=0,
\end{multline*}
from (S15)
\[
( 8 g^2 - 26 g + 20)A_{\kappa_1^2} +(2g-4)A_{\kappa_2} + (4-2g)A_{\delta_1^2} + 8(g-1)(g-2)A_{\delta_0^2} =0,
\]
from (S16) for $\lfloor g/2 \rfloor \leq i \leq g-3$
\begin{multline*}
(4i-1)A_{\kappa_1^2} + A_{\kappa_2}+A_{\omega^{(i)}} -A_{\omega^{(i+1)}}
+A_{\delta_1^2}+(2i-1)A_{\delta_{1,g-i-1}} \\
= \frac{1}{2i-2} \mathop{\sum_{0\leq \alpha_0\leq \alpha_1\leq k-1}}_{\alpha_0 + \alpha_1 = g-i-1} m_{i,k,(\alpha_0,\alpha_1)}\cdot  N_{g-i-1,k,(k-1-\alpha_1,k-1-\alpha_0)}
\end{multline*}
and
\[
(4g-9)A_{\kappa_1^2} + A_{\kappa_2}+A_{\omega^{(g-2)}}
+(4g-8)A_{\delta_1^2}+(2g-5)A_{\delta_{1,1}} 
=\frac{m_{g-2,k,(0,1)}}{2g-6},
\]
from (S17)
\begin{eqnarray*}
3A_{\kappa_1^2} + A_{\kappa_2} -2 A_{\lambda\delta_0} +A_{\lambda\delta_1} -44 A_{\delta_0^2} -A_{\delta_1^2} + 12A_{\delta_{0,g-1}} -12A_{\delta_{00}} +A_{\theta_1}=0,
\end{eqnarray*}
from (S18) for $4\leq i\leq \lfloor (g+1)/2\rfloor$
\begin{multline*}
3A_{\kappa_1^2}+A_{\kappa_2}-A_{\omega^{(i)}}-A_{\omega^{(g-i+1)}}-A_{\delta_1^2}+A_{\delta_{i-1,g-i}}-A_{\lambda^{(i)}}-A_{\lambda^{(g-i+1)}}\\
+A_{\lambda\delta_1} -12A_{\delta_{0,i-1}} -12A_{\delta_{0,g-i}} +12A_{\delta_{0,g-1}}+12A_{\theta_{i-1}}=0,
\end{multline*}
and
\begin{multline*}
3A_{\kappa_1^2}+A_{\kappa_2}-A_{\omega^{(3)}}-A_{\omega^{(g-2)}}-A_{\delta_1^2}+A_{\delta_{2,g-3}}-A_{\lambda^{(3)}}-A_{\lambda\delta_2}\\
+A_{\lambda\delta_1} -12A_{\delta_{0,2}} -12A_{\delta_{0,g-3}} +12A_{\delta_{0,g-1}}+12A_{\theta_2}=0,
\end{multline*}
\begin{multline*}
3A_{\kappa_1^2}+A_{\kappa_2}-A_{\omega^{(2)}}+A_{\delta_{1,g-2}} \\
-A_{\lambda\delta_2}-12A_{\delta_{0,1}} -12A_{\delta_{0,g-2}} +12A_{\delta_{0,g-1}}+12A_{\theta_1}=0.
\end{multline*}

\vskip1pc

Our aim is to show that the above linear relations yield a non-degenerate linear system. Let 
\begin{multline*}
\{e_{\kappa_1^2}, e_{\kappa_2}, e_{\delta_0^2}, e_{\lambda\delta_0}, e_{\delta_1^2}, e_{\lambda\delta_1}, e_{\lambda\delta_2}, \dots, e_{\omega^{(i)}},\dots, e_{\lambda^{(j)}}, \dots,\\
\dots, e_{\delta_{k,l}}, \dots, e_{\theta_1},\dots,e_{\theta_{\lfloor (g-1)/2\rfloor}}\}
\end{multline*}
be the canonical basis of $\QQ^{\lfloor (g^2-1)/4\rfloor + 3g-1}$ indexed by the tautological codimension-two generating classes from \S \ref{generatingclasses}.
Let $Q_g$ be the square matrix of order $\lfloor (g^2-1)/4\rfloor + 3g-1$ associated to the linear system given by the above relations. 
As we have already noted, since the test surfaces in (S1)-(S18) are defined also for odd values of $g\geq 6$, the matrix $Q_g$ is defined also for $g$ odd, $g\geq 7$. For each $g\geq 6$ we construct a square matrix $T_g$ of order $\lfloor (g^2-1)/4\rfloor + 3g-1$ such that $Q_g \cdot T_g$ is lower-triangular with nonzero diagonal coefficients.

We describe the columns of $T_g$ dividing them into $18$ groups, similarly to the description of the relations that yield the rows of $Q_g$. 

\vskip1pc

{\setlength{\leftmargini}{0pt} 
\begin{enumerate}
\itemsep0.5em 
\item[(T1)] For $2\leq i \leq \lfloor g/2 \rfloor$ consider the column $e_{\omega^{(i)}}$.

\item[(T2)] For $i,j$ such that $2\leq i\leq j \leq g-3$ and $i+j\leq g-1$ consider $e_{\delta_{i,j}}$.

\item[(T3)] Consider the column $e_{\delta_{1,g-2}}$.

\item[(T4)] For $2\leq i \leq g-3$ consider $e_{\delta_{1,i}}$.

\item[(T5)] Take $e_{\delta_{0,g-1}}$.

\item[(T6)] For $3\leq i \leq g-3$ consider $e_{\lambda^{(i)}}$. Moreover, consider the column $e_{\lambda\delta_2}$.

\item[(T7)] Consider the column $e_{\delta_{1,1}}$.

\item[(T8)] Consider $e_{\lambda\delta_0}$.

\item[(T9)] Consider 
\[
2e_{\delta_{1,2}}+e_{\delta_{0,2}}-10e_{\lambda\delta_2}.
\]
Moreover for $3\leq j \leq g-3$ consider 
\[
2e_{\delta_{1,j}}+e_{\delta_{0,j}}-10e_{\lambda^{(g-j)}}.
\]

\noindent Take the following:

\vskip1pc

\item[(T10)] \hfill
$\begin{aligned}[t]
60e_{\lambda\delta_1}+12e_{\delta_1^2}-3e_{\delta_{0,g-1}}+8e_{\delta_{0,1}}+2e_{\delta_{00}}
\end{aligned}
$\hfill\null

\item[(T11)]  
\hfill
$\begin{aligned}[t]
12e_{\lambda\delta_1}+e_{\lambda\delta_0}-e_{\delta_{0,g-1}}
\end{aligned}
$\hfill\null

\item[(T12)] 
\hfill
$\begin{aligned}[t]
e_{\delta_{0,g-2}}+2e_{\delta_{1,g-2}}
\end{aligned}
$\hfill\null

\item[(T13)] 
\hfill
$\begin{aligned}[t]
12e_{\lambda\delta_1}+6e_{\lambda\delta_0}-e_{\delta_{0,g-1}}-e_{\delta_{0,1}}-e_{\delta_{00}}
\end{aligned}
$\hfill\null

\item[(T14)] 
\hfill
$\begin{aligned}[t]
6\Bigg(e_{\kappa_1^2}+e_{\omega^{(\lfloor g/2 \rfloor)}}+ 2\sum_{2\leq s<g/2} e_{\omega^{(s)}}+12 \left(e_{\lambda\delta_0}+2e_{\lambda\delta_1}+2e_{\lambda\delta_2}+2\sum _{3\leq s \leq g-3} e_{\lambda^{(s)}} \right)  \\
 -2\sum_{(i,j)\not= (0,g-1)} e_{\delta_{ij}}  \Bigg) -11e_{\delta_{0,g-1}}
\end{aligned}
$\hfill\null

\item[(T15)]  
\hfill
$\begin{aligned}[t]
e_{\kappa_2}.
\end{aligned}
$\hfill\null

\item[(T16)] For $\lfloor g/2 \rfloor \leq i \leq g-3$ consider $e_{\omega^{(i+1)}}-e_{\omega^{(g-i-1)}}$. Furthermore consider 
\begin{multline*}
(g-1)\left(  \sum_{2\leq s\leq g/2}12\left(\frac{g}{2}-s\right) (e_{\omega^{(g-s)}} - e_{\omega^{(s)}})+12e_{\delta_1^2} -24e_{\delta_{1,1}}+2e_{\delta_{0,g-1}} \right)\\+3e_{\delta_0^2}-6e_{\delta_{00}}.
\end{multline*}

\item[(T17)] Consider 
\begin{multline*}
 6g\, e_{\kappa_2}+ \sum_{2\leq s\leq g/2}12\left(\frac{g}{2}-s\right) (e_{\omega^{(g-s)}} - e_{\omega^{(s)}})+12\left(1-\frac{g}{2}\right)e_{\delta_1^2} \\
 +12(g-2)e_{\delta_{1,1}}-3e_{\delta_0^2}+(2-g)e_{\delta_{0,g-1}} +6e_{\delta_{00}}.
\end{multline*}

\item[(T18)] For $4\leq i \leq \lfloor (g+1)/2 \rfloor $ consider $e_{\theta_{i-1}}$. 
Moreover consider $e_{\theta_2}$ and finally the column
\begin{multline*}
{}-6g\, e_{\kappa_2}+ \sum_{2\leq s\leq g/2}12\left(\frac{g}{2}-s\right) (e_{\omega^{(s)}}-e_{\omega^{(g-s)}})+12\left(\frac{g}{2}-1\right)e_{\delta_1^2} \\
 +12(2-g)e_{\delta_{1,1}}+3e_{\delta_0^2}+(g-2)e_{\delta_{0,g-1}} -6e_{\delta_{00}}+72e_{\theta_1}.
\end{multline*}

\end{enumerate}

\vskip1pc

One checks that $Q_g\cdot T_g$ is an lower-triangular matrix with all the coefficients on the main diagonal different from zero.
It follows that $\det(Q_g)\not= 0$ for all $g\geq 6$. 
In particular, when $g=2k$, we are able to solve the system and find the coefficients $A$.

\begin{thm}
\label{BNcodim2mainthm}
For $k \geq 3$, the class of the locus $\Mbar^1_{2k,k}\subset\Mbar_{2k}$ is
\begin{eqnarray*}
\left[ \Mbar^1_{2k,k} \right]_Q &\!\!\! = \!\!\!& c\Bigg[ A_{\kappa_1^2}\kappa_1^2 + A_{\kappa_2}\kappa_2 + A_{\delta_0^2}\delta_0^2 + A_{\lambda\delta_0}\lambda\delta_0 + A_{\delta_1^2}\delta_1^2  + A_{\lambda\delta_1}\lambda\delta_1  \nonumber\\
&& {} + A_{\lambda\delta_2}\lambda\delta_2 + \sum_{i=2}^{2k-2} A_{\omega^{(i)}}\omega^{(i)} + \sum_{i=3}^{2k-3} A_{\lambda^{(i)}}\lambda^{(i)} + \sum_{i,j} A_{\delta_{ij}}\delta_{ij} \nonumber\\
&&{}+ \sum_{i=1}^{\lfloor (2k-1)/2 \rfloor} A_{\theta_i}\theta_{i} \Bigg]
\end{eqnarray*}
in $R^2(\Mbar_{2k})$, where
\begin{eqnarray*}
c &=&  \frac{2^{k-6}(2k-7)!!}{3(k!)}\\
A_{\kappa_1^2} = - A_{\delta_0^2}&=&  3 k^2+3 k+5 \\
A_{\kappa_2} &=&  -24k(k+5)\\
A_{\delta_1^2} &=& -(3 k (9 k+41)+5) \\
A_{\lambda\delta_0} &=& -24(3 (k-1) k-5) \\
A_{\lambda\delta_1} &=& 24 \left(-33 k^2+39 k+65\right) \\
A_{\lambda\delta_2} &=& 24 (3 (37-23 k) k+185)\\
A_{\omega^{(i)}} &=& -180 i^4+120 i^3 (6 k+1)-36 i^2 \left(20 k^2+24 k-5\right)\\
   && {}+24 i \left(52 k^2-16 k-5\right)+27 k^2+123 k+5 \\
A_{\lambda^{(i)}} &=& 24 [6i^2 (3 k+5)-6 i \left(6 k^2+23 k+5\right)\\
			&& {} +159 k^2+63 k+5]\\
A_{\theta^{(i)}} &=&  -12 i[ 5 i^3+i^2 (10-20k)+i \left(20 k^2-8k-5\right)\\
			&& {}-24 k^2+32k-10] \\	
A_{\delta_{1,1}} &=& 48 \left(19 k^2-49 k+30\right)\\
A_{\delta_{1,2k-2}} &=&  \frac{2}{5} (3 k (859 k-2453)+2135)\\
A_{\delta_{00}} &=& 24k(k-1) \\
A_{\delta_{0,2k-2}} &=& \frac{2}{5} (3 k (187 k-389)-745)\\
A_{\delta_{0,2k-1}} &=& 2 (k (31 k-49)-65)
\end{eqnarray*}
and for $i\geq 1$ and $2\leq j \leq 2k-3$
\begin{eqnarray*}
A_{\delta_{ij}} &=&  	2 [3k^2 (144 i j-1)-3k (72 i j (i+j+4)+1)\\
			&&{}+180 i (i+1) j (j+1)-5] 
\end{eqnarray*}
while
\begin{eqnarray*}
A_{\delta_{0j}} &=& 2 \left(-3 \left(12 j^2+36 j+1\right)k+(72 j-3) k^2-5\right)
\end{eqnarray*}
for $1\leq j \leq 2k-3$.
\end{thm}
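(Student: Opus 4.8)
The plan is to identify $[\Mbar^1_{2k,k}]_Q$ as the unique solution of the linear system assembled from the test surfaces of Section~\ref{testsurfaces}. By Section~\ref{generatingclasses}, and in particular by \cite[Prop.~1]{MR2120989}, the class lies in $R^2(\Mbar_{2k})$, hence admits an expression of the shape (\ref{generalclass}) with undetermined rational coefficients $A$. First I would intersect both sides of (\ref{generalclass}) with each surface (S1)--(S18). The left-hand sides are computed from the restriction formulas of the Proposition in Section~\ref{kappa&omega} together with Mumford's identity $\kappa_1 = 12\lambda - \delta$ and the boundary restrictions recorded in each (S$n$); the right-hand sides come from the enumerative count of admissible covers carried out in each (S$n$) via the degeneration theory of Sections~\ref{movingpt}, \ref{m} and~\ref{lls}. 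This produces exactly the relations collected at the start of Section~\ref{result}, that is, a square linear system $Q_g\,A = b_g$ of order $\lfloor (g^2-1)/4\rfloor + 3g - 1 = \dim R^2(\Mbar_g)$.

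The decisive step is to show that this system is non-degenerate, i.e.\ $\det Q_g \neq 0$ for all $g \geq 6$. Computing the determinant directly is hopeless, so instead I would exhibit the explicit right-multiplier $T_g$ described columnwise in (T1)--(T18) and verify that $Q_g\,T_g$ is lower triangular with nonzero diagonal entries. Each column of $T_g$ is engineered so that applying $Q_g$ isolates, as its leading surviving term, a single generating class: (T1) extracts $A_{\omega^{(i)}}$ through (S1), (T2) extracts $A_{\delta_{ij}}$ through (S2), and the progressively more intricate combinations (T9)--(T18) peel off in turn $A_{\lambda^{(i)}}$, the products $A_{\lambda\delta_j}$, the diagonal classes $A_{\delta_{1,1}}$ and $A_{\delta_{00}}$, and finally $A_{\kappa_2}$ and the $A_{\theta_i}$. \textbf{This triangularity check is the main obstacle:} it requires tracking, entry by entry and uniformly in $g$, which restriction formulas feed into each intersection product and confirming that all off-diagonal contributions cancel. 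Particular care is needed in the degenerate ranges where generating classes coincide and terms add up---for instance $i = g/2$ in (S1), $g \in \{6,7,8\}$ in (S10), and $j = g-3$ in (S9)---so that one must check that $T_g$ still triangularises $Q_g$ in these boundary cases.

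Once $\det Q_g \neq 0$ is established the system has a unique solution for every $g \geq 6$, and it remains to solve it for $g = 2k$ and to simplify. Here I would first put the right-hand side in closed form: the individual counts $n_{g,d,\alpha}$, $m_{g,d,\alpha}$ and the adjusted Castelnuovo numbers $N_{g,d,\alpha,\beta}$ are given explicitly in Sections~\ref{fixing2genpts}, \ref{movingpt} and~\ref{m}, and the aggregates $T_i$, $D_{ij}$ and $\ell_{g-2,k}$ are finite sums of products of these. Setting $g = 2k$ forces $\alpha_1 = 2k - i - 1 - \alpha_0$ in the sums over Schubert indices with $\rho(i,1,k,\alpha) = -1$, so each such sum collapses and $T_i$, $D_{ij}$ become polynomials in $i, j, k$. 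Back-substituting up the triangular system then expresses every coefficient $A$ as a polynomial in the relevant indices and $k$, and the stated closed forms follow after routine but lengthy simplification. The internal relation $A_{\kappa_1^2} = -A_{\delta_0^2}$, together with the pullback to $\Mbar_{2,1}$ and the recovery of the known genus-four hyperelliptic class of \cite[Prop.~5]{MR2120989}, provide independent consistency checks.
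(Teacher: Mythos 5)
Your proposal is correct and follows essentially the same route as the paper: reduce to the ansatz (\ref{generalclass}) in $R^2(\Mbar_{2k})$, generate the square linear system from the test surfaces (S1)--(S18), prove non-degeneracy by exhibiting the explicit multiplier $T_g$ with $Q_g\,T_g$ lower triangular and nonzero on the diagonal, and then solve for $g=2k$. The consistency checks you mention (the pull-back to $\Mbar_{2,1}$ and the genus-four hyperelliptic class) are exactly the ones the paper carries out.
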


As usual, for a positive integer $n$, the symbol $(2n+1)!!$ denotes 
\[
\frac{(2n+1)!}{2^n \cdot n!},
\]
while $(-1)!!=1$.

\section{Pull-back to \texorpdfstring{$\Mbar_{2,1}$}{Mbar{2,1}}}
\label{pullbacktoM2,1}

As a check, in this section we obtain four more relations for the coefficients $A$ considering the pull-back of $\Mbar^1_{2k,k}$ to $\Mbar_{2,1}$. Let $j\colon \Mbar_{2,1}\rightarrow \Mbar_g$ be the map obtained by attaching at elements $(D,p)$ in $\Mbar_{2,1}$ a fixed general pointed curve of genus $g-2$. This produces a map $j^*\colon A^2(\Mbar_g) \rightarrow A^2(\Mbar_{2,1})$.

In \cite[Chapter 3 \S 1]{faberthesis} it is shown that $A^2(\Mbar_{2,1})$ has rank $5$ and is generated by the classes of the loci composed by curves of type $\Delta_{00}, (a), (b), (c)$ and $(d)$ as in Fig. \ref{lociinM21}. 

\begin{figure}[htbp]
\psfrag{D}[b][b]{$\Delta_{00}$}
\psfrag{a}[b][b]{$(a)$}
\psfrag{b}[b][b]{$(b)$}
\psfrag{c}[b][b]{$(c)$}
\psfrag{d}[b][b]{$(d)$}
\centering
  \includegraphics[scale=0.5]{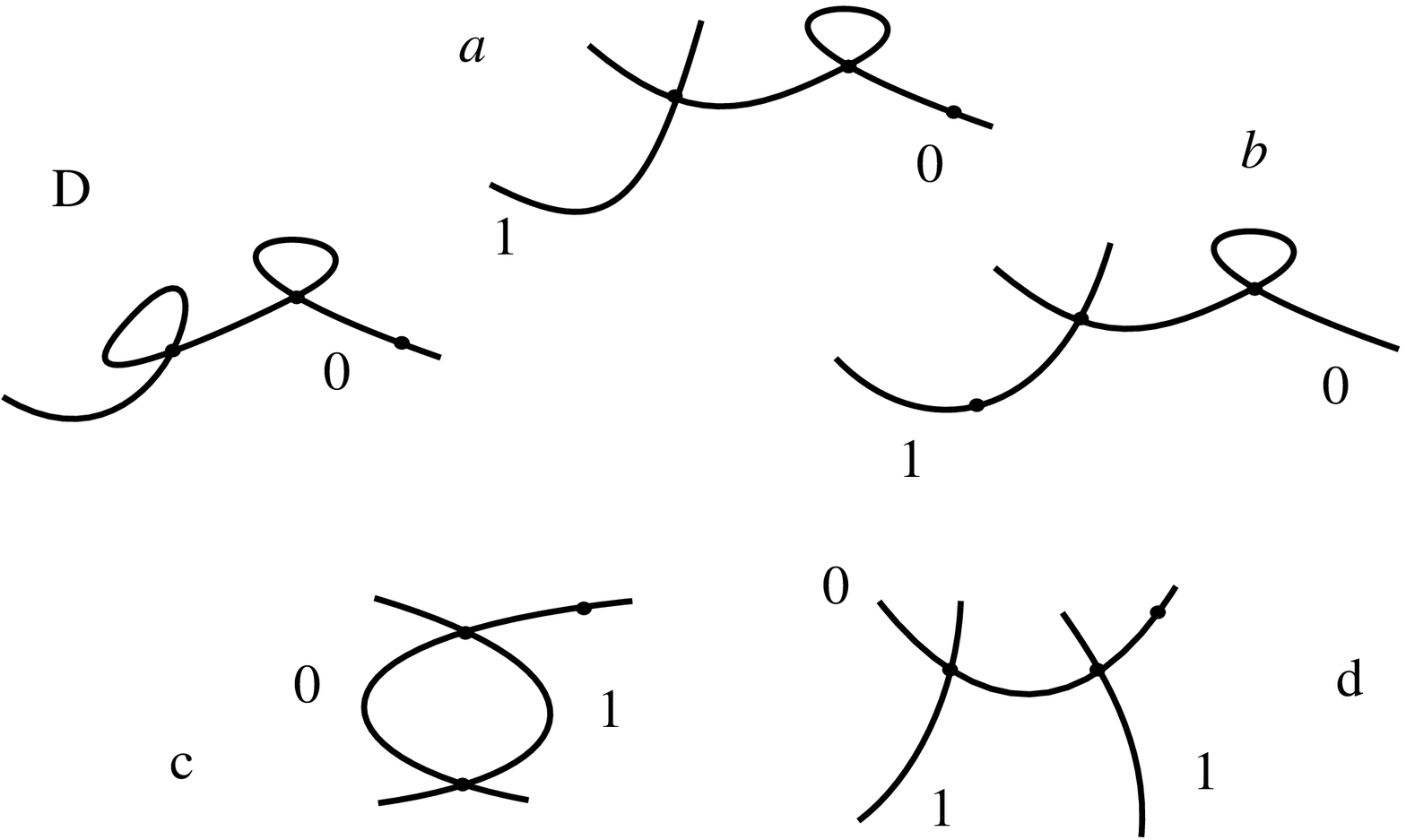}
  \caption{Loci in $\Mbar_{2,1}$}
  \label{lociinM21}
\end{figure}

We have the following pull-backs
\begin{eqnarray*}
j^*(\delta_{0,1}) &=&[(a)]_Q \\
j^*(\delta_{0,g-1}) &=& [(b)]_Q\\
j^*(\theta_1) &=& [(c)]_Q\\
j^*(\delta_{1,1}) & = & [(d)]_Q\\
j^*(\delta_{00}) &=& 	[\Delta_{00}]_Q\\
j^*(\delta_0^2) &=&  \frac{5}{3}[\Delta_{00}]_Q -2[(a)]_Q-2[(b)]_Q\\
j^*(\delta_1^2) &=& -\frac{1}{12}\left([(a)]_Q+[(b)]_Q\right)\\
j^*(\lambda \delta_0) &=& \frac{1}{6}[\Delta_{00}]_Q\\
j^*(\lambda \delta_1) &=& \frac{1}{12}\left([(a)]_Q+[(b)]_Q\right)\\
j^*(\lambda \delta_2) & = & -\lambda \psi\\
		      &=& \frac{1}{60}(-[\Delta_{00}]_Q -7 [(a)]_Q  - 12 [(c)]_Q -24 [(d)]_Q )\\								
j^*(\kappa_1^2) &=& \left( \frac{1}{5}\delta_0 + \frac{7}{5}\delta_1+ \psi \right)^2 \\
                          &=& \frac{1}{120}(17 [\Delta_{00}]_Q + 127 [(a)]_Q + 37 [(b)]_Q + 120 [(c)]_Q \\
                          &&{} + 840 [(d)]_Q)
\end{eqnarray*}
\begin{eqnarray*}	                          
j^*(\kappa_2) &=& \lambda (\lambda + \delta_1) + \psi^2\\
		      &=&  \frac{1}{120}(3 [\Delta_{00}]_Q + 25 [(a)]_Q + 11 [(b)]_Q + 24 [(c)]_Q + 168 [(d)]_Q )\\
j^*(\delta_{1,g-2}) &=& -\delta_1 \psi\\
			    &=& -\frac{1}{12}[(a)]_Q -2[(d)]_Q\\
j^*(\delta_{0,g-2}) &=& -\delta_0 \psi\\
			    &=& -\frac{1}{6}[\Delta_{00}]_Q-[(a)]_Q-2[(c)]_Q\\			    		      
j^*(\omega^{(2)}) &=& - \psi^2\\
			    &=& -\frac{1}{120}([\Delta_{00}]_Q +13 [(a)]_Q - [(b)]_Q + 24 [(c)]_Q + 168 [(d)]_Q ).
\end{eqnarray*}

For this, see relations in \cite[Chapter 3 \S 1]{faberthesis} and \cite[\S8 - \S 10]{MR717614}. We have used that on $\Mbar_{g,1}$ one has 
\[
\kappa_i=\kappa_i|_{\Mbar_g}+\psi^i
\]
(see \cite[1.10]{MR1486986}). 

All the other classes have zero pull-back. Finally, $j^*(\Mbar^1_{2k,k})$ is supported at most on the locus $(c)$. Indeed general elements in the loci $\Delta_{00}, (a), (b)$ and $(d)$ does not admit any linear series $\g^1_k$ with adjusted Brill-Noether number less than $-1$ (see also \cite[Lemma 5.1]{MR1185606}). Since the restriction of $\Mbar^1_{2k,k}$ to $j(\Mbar_{2,1})$ is supported in codimension two, then $j(\Mbar_{2,1}\setminus (c))=0$. Hence looking at the coefficient of $[\Delta_{00}]_Q$, $[(a)]_Q$, $[(b)]_Q$ and $[(d)]_Q$ 
in $j^*(\Mbar^1_{2k,k})$ we obtain the following relations
\begin{multline*}
A_{\delta_{00}}+\frac{5}{3}A_{\delta_0^2}+\frac{1}{6}A_{\lambda\delta_0}-\frac{1}{60}A_{\lambda\delta_2}+\frac{17}{120}A_{\kappa_1^2}+\frac{1}{40}A_{\kappa_2}
{}-\frac{1}{6}A_{\delta_{0,g-2}}-\frac{1}{120}A_{\omega^{(2)}}=0\\
A_{\delta_{01}}-2A_{\delta_0^2}-\frac{1}{12}A_{\delta_1^2}+\frac{1}{12}A_{\lambda\delta_1}-\frac{7}{60}A_{\lambda\delta_2}+\frac{127}{120}A_{\kappa_1^2}\\
{}+\frac{5}{24}A_{\kappa_2}-\frac{1}{12}A_{\delta_{1,g-2}}-A_{\delta_{0,g-2}}-\frac{13}{120}A_{\omega^{(2)}}=0
\\
A_{\delta_{0,g-1}}-2A_{\delta_0^2}-\frac{1}{12}A_{\delta_1^2}+\frac{1}{12}A_{\lambda\delta_1}+\frac{37}{120}A_{\kappa_1^2}+\frac{11}{120}A_{\kappa_2}+\frac{1}{120}A_{\omega^{(2)}}=0\\
A_{\delta_{1,1}}-\frac{2}{5}A_{\lambda\delta_2}+7A_{\kappa_1^2}+\frac{7}{5}A_{\kappa_2}-2A_{\delta_{1,g-2}}-\frac{7}{5}A_{\omega^{(2)}}=0.
\end{multline*}
The coefficients $A$ shown in Thm. \ref{BNcodim2mainthm} satisfy these relations.

\section{Further relations}

In this section we will show how to get further relations for the coefficients $A$ that can be used to produce more tests for our result.

\subsection{The coefficients of $\kappa_1^2$ and $\kappa_2$}

One can compute the class of $\M^1_{2k,k}$ in the open $\M_{2k}$ by the methods described by Faber in \cite{MR1722541}. Let $\mathcal{C}^k_{2k}$ be the $k$-fold fiber product of the universal curve over $\M_{2k}$ and let $\pi_{i}\colon \mathcal{C}^k_{2k} \rightarrow \mathcal{C}_{2k}$ be the map forgetting all but the $i$-th point, for $i=1,\dots,k$. We define the following tautological classes on $\mathcal{C}^k_{2k}$: $K_i$ is the class of $\pi_i^*(\omega)$, where $\omega$ is the relative dualizing sheaf of the map $\mathcal{C}_{2k}\rightarrow \M_{2k}$, and $\Delta_{i,j}$ is the class of the locus of curves with $k$ points $(C,x_1,\dots,x_k)$ such that $x_i=x_j$, for $1\leq i, j \leq k$.

Let $\mathbb{E}$ be the pull-back to $\mathcal{C}^k_{2k}$ of the Hodge bundle of rank $2k$ and let $\mathbb{F}_k$ be the bundle on  
$\mathcal{C}^k_{2k}$ of rank $k$ whose fiber over $(C,x_1,\dots,x_k)$ is 
\[
H^0(C, K_C/ K_C(-x_1\cdots-x_k)).
\]
We consider the locus $X$ in $\mathcal{C}^k_{2k}$ where the evaluation map
\[
\varphi_k \colon \mathbb{E}\rightarrow \mathbb{F}_k
\]
has rank at most $k-1$. Equivalently, $X$ parameterizes curves with $k$ points $(C,x_1,\dots,x_k)$ such that $H^0(C,K_C(-x_1\cdots-x_k))\geq k+1$ or, in other terms, $H^0(C, x_1+\cdots+x_k)\geq 2$. By Porteous formula, the class of $X$ is
\[
[X]= \left[
\begin{array}{ccccc}
e_1 & e_2 & e_3 & \cdots &  e_{k+1}\\
1     & e_1 &  e_2 & \cdots	& e_{k}\\
0	&  1	& e_1 &	\ddots & \vdots\\
\vdots	&  \ddots & \ddots	&  \ddots & e_2\\
0 & \cdots & 0&	1&	e_1
\end{array}
\right]
\]
\noindent where the $e_i$'s are the Chern classes of $\mathbb{F}_k - \mathbb{E}$. The total Chern class of $\mathbb{F}_k - \mathbb{E}$ is
\begin{multline*}
(1+K_1)(1+K_2-\Delta_{1,2})\cdots(1+K_k-\Delta_{1,k}\cdots-\Delta_{k-1,k})(1-\lambda_1+\lambda_2-\lambda_3+\cdots+\lambda_{2k}).
\end{multline*}
Intersecting the class of $X$ with $\Delta_{1,2}$ we obtain a class that pushes forward via $\pi:=\pi_1\pi_2\cdots \pi_k$ to the class of $\M^1_{2k,k}$ with multiplicity $(k-2)!(6k-2)$. We refer the reader to \cite[\S 4]{MR1722541} for formulae for computing the push-forward $\pi_*$.

For instance, when $k=3$ one constructs a degeneracy locus $X$ on the $3$-fold fiber product of the universal curve over $\M_6$.
The class of $X$ is
\[
[X] = e_1^4 - 3e_1^2 e_2 + e_2^2 + 2e_1 e_3 - e_4
\]
where the $e_i$'s are determined by the following total Chern class
\begin{eqnarray*}
(1+K_1)(1+K_2-\Delta_{1,2})(1+K_3-\Delta_{1,3}-\Delta_{2,3})(1-\lambda_1+\lambda_2-\lambda_3+\cdots+\lambda_{6}).
\end{eqnarray*}
Upon intersecting the class of $X$ with $\Delta_{1,2}$ and using the following identities
\begin{eqnarray*}
&\Delta_{1,3}\Delta_{2,3}= \Delta_{1,2}\Delta_{1,3}&\\
\Delta_{1,2}^2=-K_1\Delta_{1,2} & \Delta_{1,3}^2=-K_1\Delta_{1,3}&\Delta_{2,3}^2=-K_2\Delta_{2,3}\\
K_2\Delta_{1,2}=K_1\Delta_{1,2}&K_3 \Delta_{1,3}=K_1\Delta_{1,3}&K_3\Delta_{2,3}=K_2\Delta_{2,3},
\end{eqnarray*}
one obtains
\begin{eqnarray*}
[X]\cdot \Delta_{1,2} &=&   K_3^4 \Delta_{1,2}-3  K_3^3 \Delta_{1,2}^2+7  K_3^2 \Delta_{1,2}^3-15  K_3 \Delta_{1,2}^4+31 \Delta_{1,2}^5\\
&&{}+72 \Delta_{1,2} \Delta_{2,3}^4+172 \Delta_{1,3} \Delta_{2,3}^4- K_3^3 \Delta_{1,2} \lambda_1+3  K_3^2 \Delta_{1,2}^2 \lambda_1\\
&&{}-7  K_3 \Delta_{1,2}^3 \lambda_1+15 \Delta_{1,2}^4 \lambda_1+23 \Delta_{1,2} \Delta_{2,3}^3 \lambda_1+41 \Delta_{1,3} \Delta_{2,3}^3 \lambda_1\\
&&{}+ K_3^2 \Delta_{1,2} \lambda_1^2-3  K_3 \Delta_{1,2}^2 \lambda_1^2+7 \Delta_{1,2}^3 \lambda_1^2+6 \Delta_{1,2} \Delta_{2,3}^2 \lambda_1^2\\
&&{}+8 \Delta_{1,3} \Delta_{2,3}^2 \lambda_1^2- K_3 \Delta_{1,2} \lambda_1^3+3 \Delta_{1,2}^2 \lambda_1^3+\Delta_{1,2} \Delta_{2,3} \lambda_1^3\\
&&{}+\Delta_{1,3} \Delta_{2,3} \lambda_1^3+\Delta_{1,2} \lambda_1^4- K_3^2 \Delta_{1,2} \lambda_2+3  K_3 \Delta_{1,2}^2 \lambda_2-7 \Delta_{1,2}^3 \lambda_2\\
&&{}-6 \Delta_{1,2} \Delta_{2,3}^2 \lambda_2-8 \Delta_{1,3} \Delta_{2,3}^2 \lambda_2+2  K_3 \Delta_{1,2} \lambda_1 \lambda_2-6 \Delta_{1,2}^2 \lambda_1 \lambda_2\\
&&{}-2 \Delta_{1,2} \Delta_{2,3} \lambda_1 \lambda_2-2 \Delta_{1,3} \Delta_{2,3} \lambda_1 \lambda_2-3 \Delta_{1,2} \lambda_1^2 \lambda_2+\Delta_{1,2} \lambda_2^2\\
&&{}- K_3 \Delta_{1,2} \lambda_3+3 \Delta_{1,2}^2 \lambda_3+\Delta_{1,2} \Delta_{2,3} \lambda_3+\Delta_{1,3} \Delta_{2,3} \lambda_3\\
&&{}+2 \Delta_{1,2} \lambda_1 \lambda_3-\Delta_{1,2} \lambda_4.
\end{eqnarray*}
Computing the push-forward to $\M_6$ of the above class, one has
\begin{eqnarray*}
\left[ \M^1_{6,3} \right]_Q &= & \frac{1}{16} \big( ( 18 \kappa_0 -244) \kappa_2 + 7\kappa_1^2 +(64-10\kappa_0)\kappa_1 \lambda_1 \\
					&&{} + (3\kappa_0^2-14\kappa_0) \lambda_1^2 + (14\kappa_0-3\kappa_0^2)\lambda_2 \big).
\end{eqnarray*}
Note that $\kappa_0 = 2g-2=10$, $12\lambda_1=\kappa_1$ and $2\lambda_2=\lambda_1^2$, hence we recover
\begin{eqnarray*}
\left[ \M^1_{6,3} \right]_Q &= & \frac{41}{144}\kappa_1^2 -4 \kappa_2.
\end{eqnarray*}

\begin{rem}
As a corollary one obtains the class of the Maroni locus in $\M_6$.
The trigonal locus in $\M_{2k}$ has a divisor known as the Maroni locus (see \cite{MR0024182}, \cite{MR882414}). While the general trigonal curve of even genus admits an embedding in $\PP^1 \times \PP^1$ or in $\PP^2$ blown up in one point, the trigonal curves admitting an embedding to other kind of ruled surfaces constitute a subvariety of codimension one inside the trigonal locus.

The class of the Maroni locus in the Picard group of the trigonal locus in $\Mbar_{2k}$ has been studied in \cite{MR1775309}. For $k=3$, one has that the class of the Maroni locus is $8\lambda\in \mbox{Pic}_\QQ(\Mbar^1_{6,3})$. Knowing the class of the trigonal locus in $\M_6$, one has that the class of the Maroni locus in $\M_6$ is
\[
8\lambda \left( \frac{41}{144}\kappa_1^2 -4 \kappa_2\right)  .
\]
\end{rem}

\subsection{More test surfaces}

One could also consider more test surfaces. For instance one can easily adapt the test surfaces of type $(\varepsilon)$ and $(\kappa)$ from \cite[\S 3]{MR1078265}. They are all disjoint from the locus $\Mbar^1_{2k,k}$ and produce relations compatible with the ones we have shown.

\subsection{The relations for $g=5$} As an example, let us consider the case $g=5$. We know that the tautological ring of $\M_5$ is generated by $\lambda$, that is, there is a non-trivial relation among $\kappa_1^2$ and $\kappa_2$ (see \cite{MR1722541}). The square matrix $Q_5$ from  \S \ref{result} expressing the restriction of the generating classes in $\Mbar_5$ to the test surfaces (S1)-(S18) (we have to exclude the relation from (S10) which is defined only for $g\geq 6$), has rank $19$, hence showing that the class $\kappa_1^2$ (or the class $\kappa_2$) and the $18$ boundary classes in codimension two in $\Mbar_5$ are independent.

\section{The hyperelliptic locus in \texorpdfstring{$\Mbar_4$}{Mbar4}}

The class of the hyperelliptic locus in $\Mbar_4$ has been computed in \cite[Prop. 5]{MR2120989}. In this section we will recover the formula by the means of the techniques used so far.

The class will be expressed as a linear combination of the $14$ generators for $R^2(\Mbar_4)$ from \cite{MR1078265}: $\kappa_2$, $\lambda^2$, $\lambda\delta_0$, $\lambda\delta_1$, $\lambda\delta_2$, $\delta_0^2$, $\delta_0\delta_1$, $\delta_1^2$, $\delta_1\delta_2$, $\delta_2^2$, $\delta_{00}$, $\gamma_1$, $\delta_{01a}$ and $\delta_{1,1}$. Remember that there exists one unique relation among these classes, namely
\begin{multline*}
60 \kappa_2 - 810 \lambda^2 + 156 \lambda\delta_0 + 252 \lambda \delta_1 -3\delta_0^2 - 24\delta_0\delta_1\\
{}+24\delta_1^2 -9\delta_{00}+7\delta_{01a} -12\gamma_1-84\delta_{1,1}=0,
\end{multline*}
hence $R^2(\Mbar_4)$ has rank $13$. Write $[\Mbar^1_{4,2}]_Q$ as
\begin{eqnarray*}
 \left[ \Mbar^1_{4,2} \right]_Q & = & A_{\kappa_2}\kappa_2 + A_{\lambda^2}\lambda^2+ A_{\lambda\delta_0}\lambda\delta_0+ A_{\lambda\delta_1}\lambda\delta_1+ A_{\lambda\delta_2}\lambda\delta_2+ A_{\delta_0^2}\delta_0^2\\
&& {}+ A_{\delta_0\delta_1}\delta_0\delta_1 + A_{\delta_1^2}\delta_1^2 + A_{\delta_1\delta_2}\delta_1\delta_2 +A_{\delta_2^2}\delta_2^2+ A_{\delta_{00}}\delta_{00}+ A_{\gamma_1}\gamma_1\\
&&{}+ A_{\delta_{01a}}\delta_{01a}+ A_{\delta_{1,1}}\delta_{1,1}.
\end{eqnarray*}
Let us construct $13$ independent relations among the coefficients $A$.

The surfaces (S1), (S3), (S5), (S6), (S8), (S12)-(S18) from \S \ref{testsurfaces} give respectively the following $12$ independent relations
\begin{eqnarray*}
8 A_{\delta_2^2}=36\\
4A_{\delta_2^2} -2A_{\delta_1 \delta_2}= 12\\
-4A_{\lambda\delta_1}-48A_{\delta_0\delta_1}+8A_{\delta_1^2}-48A_{\delta_{01a}}=0\\
A_{\lambda\delta_2}-A_{\delta_1\delta_2}=0\\
2A_{\lambda^2}+24A_{\lambda\delta_0}-2A_{\lambda\delta_1}+288A_{\delta_0^2}-24A_{\delta_0\delta_1}+2A_{\delta_1^2}+144A_{\delta_{00}}+A_{\delta_{1,1}}=0\\
-4A_{\lambda\delta_1}+3A_{\lambda\delta_2}-48A_{\delta_0\delta_1}+8 A_{\delta_1^2}-3A_{\delta_1\delta_2}-12A_{\delta_{01a}}+3A_{\delta_{1,1}}=0\\
8A_{\delta_0^2}-4A_{\delta_0\delta_1}+2A_{\delta_1^2}-2A_{\delta_1\delta_2}+2A_{\delta_2^2}+4A_{\delta_{0,0}}+A_{\delta_{1,1}}=4\\
-4A_{\lambda\delta_0}-96A_{\delta_0^2}+4A_{\delta_0\delta_1}-48A_{\delta_{00}}-A_{\delta_{1,1}}-12A_{\delta_{01a}}=0\\
48A_{\delta_0^2}-4A_{\delta_1^2}+4A_{\kappa_2}=0\\
16A_{\delta_1^2}-2A_{\delta_2^2}+2A_{\kappa_2}+6A_{\delta_{1,1}}=30\\
-2A_{\lambda\delta_0}+A_{\lambda\delta_1}-44A_{\delta_0^2}+12A_{\delta_0\delta_1}-A_{\delta_1^2}+A_{\kappa_2}-12A_{\delta_{00}}+12A_{\delta_{01a}}+A_{\gamma_1}=0\\
A_{\delta_1\delta_2}-A_{\lambda\delta_2}+A_{\delta_2^2}+A_{\kappa_2}+12A_{\delta_{01a}}+12A_{\gamma_1}=0.
\end{eqnarray*}

Next we look at the pull-back to $\Mbar_{2,1}$. The pull-back of the classes $\kappa_2$, $\lambda\delta_0$, $\lambda\delta_1$, $\lambda\delta_2$, $\delta_0^2$, $\delta_1^2$, $\delta_{00}$, $\gamma_1=\theta_1$, $\delta_{01a}=\delta_{0,g-1}$ and $\delta_{1,1}$ have been computed in \S \ref{pullbacktoM2,1}. Moreover
\begin{eqnarray*}
j^*(\lambda^2) & = & \frac{1}{60}([\Delta_{00}]_Q + [(a)]_Q + [(b)]_Q)\\
j^*(\delta_{0}\delta_1) & = & [(a)]_Q+ [(b)]_Q\\
j^*(\delta_1 \delta_2) & = & -\delta_1 \psi\\
				&=& -\frac{1}{12}[(a)]_Q -2[(d)]_Q\\
j^*(\delta_2^2) & = & \psi^2\\
			&=& \frac{1}{120}([\Delta_{00}]_Q +13 [(a)]_Q - [(b)]_Q + 24 [(c)]_Q + 168 [(d)]_Q ).
\end{eqnarray*}
Considering the coefficient of $[\Delta_{00}]_Q$ 
yields the following relation
\begin{eqnarray*}
A_{\delta_{00}}+\frac{5}{3}A_{\delta_0^2}+\frac{1}{6}A_{\lambda\delta_0}-\frac{1}{60}A_{\lambda\delta_2}+\frac{1}{40}A_{\kappa_2}+\frac{1}{60}A_{\lambda^2}+\frac{1}{120}A_{\delta_2^2}=0.
\end{eqnarray*}

\noindent All in all we get $13$ independent relations, and the class of $\Mbar^1_{4,2}$ follows
\begin{eqnarray*}
2 \left[ \Mbar^1_{4,2} \right]_Q &= & 27 \kappa_2 -339 \lambda^2+ 64\lambda\delta_0+90\lambda\delta_1+ 6\lambda\delta_2- \delta_0^2-8\delta_0\delta_1\\
&& {} + 15\delta_1^2 +6\delta_1\delta_2+9\delta_2^2-4\delta_{00}-6\gamma_1+3\delta_{01a}-36\delta_{1,1}.
\end{eqnarray*}

\section*{Acknowledgments}
This work is part of my PhD thesis. I am grateful to my advisor Gavril Farkas for his guidance. During my PhD I have been supported by the DFG Graduierten\-kolleg 870 Berlin-Zurich and the Berlin Mathematical School. Since October 2011 I have been supported by a postdoctoral fellowship at the Leibniz University in Hannover.

\bibliographystyle{alpha}
\bibliography{Biblio.bib}

\end{document}